\documentclass[a4paper,fleqn]{cas-sc}

\usepackage[numbers]{natbib}
\usepackage{cleveref}
\usepackage{ushort}
\usepackage{amsmath}
\usepackage{amssymb}
\usepackage{multirow}
\usepackage{amsfonts}
\usepackage{amsthm}
\usepackage{algorithm}
\usepackage{algpseudocode}
\algrenewcommand\algorithmicrequire{\textbf{Input:}}
\algrenewcommand\algorithmicensure{\textbf{Output:}}
\usepackage{siunitx}
\usepackage{subcaption}

\newcommand{\vertiii}[1]{{\left\vert\kern-0.25ex\left\vert\kern-0.25ex\left\vert #1 
		\right\vert\kern-0.25ex\right\vert\kern-0.25ex\right\vert}}

\newcommand{\inner}[2]{\left(#1, #2\right)}
\newcommand{\dginner}[2]{a\left(#1, #2\right)}
\newcommand{\llnorm}[1]{\big\lVert #1\big\rVert_{L_2(\Omega)}}

\newcommand{\ilnorm}[1]{\left\lVert #1\right\rVert_{L_\infty(0,T;L_2(\Omega))}}
\newcommand{\Llnorm}[1]{\left\lVert #1\right\rVert_{L_2(0,T;L_2(\Omega))}}

\newcommand{\enorm}[1]{\left\lVert #1\right\rVert_{V}}
\newcommand{\ienorm}[1]{\left\lVert #1\right\rVert_{L_\infty(0,T;V)}}

\newcommand{\lenorm}[1]{\left\lVert #1\right\rVert_{L_2(0,T;V)}}
\newcommand{\gnorm}[1]{\left\lVert #1\right\rVert_{L_2(\Gamma_N)}}

\newcommand{\ignorm}[1]{\left\lVert #1\right\rVert_{L_\infty(0,T;L_2(\Gamma_N))}}
\newcommand{\lgnorm}[1]{\left\lVert #1\right\rVert_{L_2(0,T;L_2(\Gamma_N))}}

\newcommand{\ginner}[2]{\left(#1, #2\right)_{L_2(\Gamma_N)}}
\newcommand{\Nphi}{N_\varphi}

\newcommand{\ndof}{n_{\text{dof}}}

\newcommand{\dgelliptic}{\boldsymbol R}

\newcommand{\Dgv}{\boldsymbol{H}^s(\mathcal{E}_h)}
\newcommand{\Dgvh}{\boldsymbol{\mathcal{D}}_k(\mathcal{E}_h)}
\newcommand{\jump}[2]{J^{\alpha_0,\beta_0}_0\left(#1,#2\right)}
\newcommand{\njump}[1]{[#1\otimes\boldsymbol{n}_e]}

\newcommand{\sume}{\sum\limits_{e\subset\Gamma_h\cup\Gamma_D}\int_e}

   \def\g{\gamma}

\def\q{\quad} \def\qq{\quad\quad}
 
\newcommand{\bs}[1]{\boldsymbol{#1}}
\numberwithin{equation}{section}

\newtheorem{theorem}{Theorem}[section]

\newtheorem{lemma}{Lemma}[section]
\newtheorem{rmk}{Remark}

\begin{document}
\let\WriteBookmarks\relax
\def\floatpagepagefraction{1}
\def\textpagefraction{.001}

\shorttitle{DG-POD for viscoelasticity}    

\shortauthors{B.Shin and Y.Jang}  

\title[mode = title]{A stable DG-POD reduced-order method for dynamic linear viscoelasticity}  



%
\author{Byeong-Chun Shin}[orcid=0000-0001-8625-5597]
\ead{bcshin@chonnam.ac.kr}
\credit{Investigation, Methodology, Writing - original draft, review and editing}
\author{Yongseok Jang}[orcid=0000-0002-2036-558X]
\cormark[1]

\ead{yongseok.jang@chonnam.ac.kr}
\credit{Formal analysis, Investigation, Methodology, Software, Visualization,
Writing - original draft, review and editing}

\affiliation{organization={Department of Mathematics, Chonnam National University},
            addressline={77 Yongbong-ro, Buk-gu}, 
            city={Gwangju},
            postcode={61186},
            country={South Korea}}






\cortext[1]{Corresponding author}



\begin{abstract}
We present a reduced-order modeling framework based on proper orthogonal decomposition for dynamic linear viscoelasticity governed by generalized Maxwell models. The hereditary constitutive law is reformulated using internal variables, and the resulting problem is discretized in space by a symmetric interior penalty discontinuous Galerkin method and in time by either the backward Euler or Crank--Nicolson scheme. The reduced model is obtained by projecting the fully discrete system onto low-dimensional spaces constructed from displacement snapshots. For the full-order discontinuous Galerkin formulation, we show well-posedness and derive an \textit{a priori} stability bound without using Gronwall's inequality. Consequently, the bound grows at most linearly, rather than exponentially, with the final time. We further derive spatial error estimates and an error decomposition that separates the effects of spatial discretization, temporal discretization, and proper orthogonal decomposition truncation. The reduced formulation replaces the large full-order systems with low-dimensional problems, enabling efficient long-time simulations. Numerical experiments verify the predicted convergence behavior, assess different inner products for constructing the reduced basis, and demonstrate accurate recovery of transient oscillations and viscoelastic relaxation with substantial computational savings, even after accounting for snapshot generation and basis construction.
\end{abstract}




\begin{keywords}
Proper orthogonal decomposition\sep Reduced-order modeling\sep Interior penalty Galerkin methods\sep Linear viscoelasticity \sep Maxwell solid
\end{keywords}

\maketitle

\section{Introduction}\label{sec:intro}
Linear viscoelasticity provides a fundamental framework for describing the dynamic behavior of materials exhibiting both elastic and viscous responses, such as polymers, biological tissues, elastomers, and composite materials. Unlike purely elastic solids, viscoelastic materials exhibit stress relaxation and energy dissipation through hereditary constitutive relations, in which the current stress depends on the entire deformation history. Such models have found widespread applications in structural dynamics, vibration analysis, biomedical engineering, and soft-material design \cite{koeller1984applications,lin2009viscoelastic}. In recent years, generalized Maxwell models based on sums of exponential relaxation functions have attracted considerable attention because they can accurately represent a broad class of viscoelastic materials, while more general relaxation kernels, including power-law kernels, can be efficiently approximated by finite sums of exponentials \cite{xiao2016equivalence}.

The numerical simulation of dynamic viscoelastic problems remains computationally demanding primarily due to the hereditary nature of the constitutive law. The resulting memory effects require the storage and repeated evaluation of the complete solution history, leading to significant computational overhead. Even when the constitutive equation is reformulated using internal variables \cite{johnson} for exponential relaxation kernels, the resulting full-order models still involve large-scale systems of equations after spatial discretization, making simulations computationally intensive for large-scale problems.

Among high-order discretization techniques, discontinuous Galerkin (DG) methods provide several attractive features for transient wave propagation problems, including local conservation, geometric flexibility, high-order accuracy, hp-adaptivity, and excellent parallel scalability \cite{DG,houston2002discontinuous,wihler2006locking}. These advantages have motivated their successful application to dynamic viscoelasticity, including both classical and fractional-order constitutive models \cite{jang2023priori,jang2024discontinuous}. However, the increased number of degrees of freedom associated with DG discretizations significantly increases the computational cost, especially for long-time simulations and many-query applications.

Projection-based reduced-order modeling has emerged as one of the most effective approaches for reducing the computational complexity of large-scale dynamical systems. Among various techniques, the proper orthogonal decomposition (POD) method constructs low-dimensional approximation spaces from representative solution snapshots and has been successfully applied to a broad range of time-dependent partial differential equations because of its simplicity, robustness, and computational efficiency. In particular, the method of snapshots introduced by Sirovich in 1987 provided a practical and efficient framework for computing POD bases from data, significantly advancing its application to fluid dynamics and related fields \cite{Si1, Si2, Si3}. In general, the proper orthogonal decomposition (POD) constructs an orthonormal basis that optimally represents a collection of snapshots by minimizing the projection error in the least-squares sense \cite{kunisch,kunisch2,liang,SS2}. Numerous studies have demonstrated the effectiveness of POD for fluid dynamics, structural mechanics, and coupled multiphysics problems. Nevertheless, comparatively little attention has been devoted to POD reduced-order models for DG discretizations of second-order viscoelastic wave equations formulated using internal variables. Moreover, the influence of different POD inner products on the resulting reduced approximation has not been systematically investigated in the DG framework.

The objective of this work is to develop a POD-based reduced-order model for discontinuous Galerkin discretizations of second-order viscoelastic wave equations. The proposed approach combines internal-variable formulations with projection-based model reduction to efficiently handle memory effects while reducing computational complexity. Also, we investigate the impact of different POD inner products on the accuracy of the reduced model within the DG framework.

The novelty of this work lies in a reduced-order computational framework that combines an internal-variable reformulation, a displacement-based solution strategy, and proper orthogonal decomposition within a symmetric interior penalty discontinuous Galerkin discretization of dynamic linear viscoelasticity. The internal variables replace the hereditary convolution by local evolution equations, thereby avoiding repeated quadrature over the complete deformation history. At the fully discrete level, the coupled problem is reorganized so that only the displacement system is solved at each time step, while the velocity and internal variables are updated algebraically. The displacement system is then projected onto a low-dimensional space, while retaining the mass, stiffness, jump-stabilization, and internal-variable operators of the full-order formulation. We further establish well-posedness and derive an \textit{a priori} stability bound without using Gr\"onwall's inequality; the resulting estimate grows at most linearly, rather than exponentially, with the final time and is therefore suitable for long-time simulations. In addition, we derive an error decomposition that separates the spatial discretization, temporal discretization, and proper orthogonal decomposition truncation contributions in terms of the mesh size, time step, and neglected singular values. Numerical experiments show that the reduced model reproduces both transient oscillations and viscoelastic relaxation with accuracy comparable to the full-order approximation, while greatly reducing the linear-system dimension and achieving large speed-ups in the solution phase.

The remainder of this paper is organized as follows. After introducing the notation and preliminary results, \cref{sec:model} presents the viscoelastic model and its internal-variable formulation. \Cref{sec:dg} introduces the DG discretization and its analysis, while \cref{sec:pod} develops the POD reduced-order model. \Cref{sec:numeric} contains the numerical experiments, and \cref{sec:conclusion} concludes the paper.

\paragraph{Preliminary}
We adopt standard notation throughout. The spaces $L_p(\Omega)$, $H^s(\Omega)$, and $W^s_p(\Omega)$
(with $s,p \ge 0$) denote the usual Lebesgue and Sobolev spaces.
For a normed space $X$, the symbol $\lVert\cdot\rVert_X$ denotes the norm in $X$, which for inner-product spaces is induced by the corresponding inner product. In particular,
$\lVert\cdot\rVert_{L_2(\Omega)}$ denotes the norm generated by the $L_2(\Omega)$ inner product,
written for brevity as $(\cdot,\cdot)$. When integration is restricted to a subset
$S \subset \bar{\Omega}$ we write $(\cdot,\cdot)_{L_2(S)}$.

For time-dependent functions we use the Bochner space notation. Given a normed space $X$,
the space $L_p(0,T;X)$ consists of measurable functions $f:(0,T)\to X$ with norm
\[
\lVert f\rVert_{L_p(0,T;X)}=\left(\int_0^T \lVert f(t)\rVert_X^p\,dt\right)^{1/p},\qquad 1\le p<\infty.
\]
For $p=\infty$ the norm is defined by the essential supremum
\[
\lVert f\rVert_{L_\infty(0,T;X)}=\mathrm{ess\,sup}_{0\le t\le T}\lVert f(t)\rVert_X .
\]
When convenient, the upper limit $T$ will be replaced by another time $t \in [0,T]$.

For vector- and tensor-valued functions we use the same notation for inner products.
For example,
\[(\boldsymbol{v},\boldsymbol{w})=\int_\Omega \boldsymbol{v}\cdot\boldsymbol{w}\,d\Omega,
\qquad
(\ushort{\boldsymbol{v}},\ushort{\boldsymbol{w}})
=\int_\Omega \ushort{\boldsymbol{v}}:\ushort{\boldsymbol{w}}\,d\Omega
=\sum_{i,j=1}^d\int_\Omega v_{ij}w_{ij}\,d\Omega,
\]
for vector fields $\boldsymbol{v},\boldsymbol{w}$ and second-order tensors
$\ushort{\boldsymbol{v}},\ushort{\boldsymbol{w}}$. The outer product of two vectors $\boldsymbol{v},\boldsymbol{w}\in\mathbb{R}^d$ is denoted by
$\boldsymbol{v}\otimes\boldsymbol{w}$ and defined componentwise as
\[
(\boldsymbol{v}\otimes\boldsymbol{w})_{ij}=v_i w_j,\qquad i,j=1,\ldots,d .
\]
Boldface notation is also used for vector-valued spaces, e.g.
\[
\boldsymbol L_2(\Omega)=[L_2(\Omega)]^d,
\qquad
\boldsymbol H^s(\Omega)=[H^s(\Omega)]^d,
\]
with analogous definitions for other Sobolev and finite element spaces.

\section{Model problem}\label{sec:model}
Let $\Omega \subset \mathbb{R}^d$ denote the interior of a bounded polytopic domain occupied by a linear, homogeneous, and isotropic viscoelastic solid \cite{VE}. We consider the evolution of the displacement field $\boldsymbol{u}$ and the Cauchy stress tensor $\ushort{\boldsymbol{\sigma}}$ over the time interval $(0,T]$, where $T>0$ is a prescribed final time. The motion of the material is governed by the balance of linear momentum,
\begin{equation}
\rho \ddot{\boldsymbol{u}} - \nabla \cdot \ushort{\boldsymbol{\sigma}} = \boldsymbol{f}_b
\qquad \text{on } \Omega \times (0,T],
\label{eq:primal:visco}
\end{equation}
where $\rho$ denotes the constant mass density and $\boldsymbol{f}_b$ represents an applied body force. Throughout, $\dot{\boldsymbol{u}}$ and $\ddot{\boldsymbol{u}}$ represent the velocity and acceleration, respectively.

To complete the boundary-value formulation, the boundary $\partial\Omega$ is decomposed into two disjoint parts $\Gamma_D$ and $\Gamma_N$, where $\Gamma_D$ is assumed to have positive surface measure. The system is then subject to homogeneous displacement conditions and prescribed surface tractions,
\begin{alignat}{2}
\boldsymbol{u}(t) &= \boldsymbol{0} &\qquad& \text{on } \Gamma_D \times [0,T], \label{bc:Dirichlet} \\
\ushort{\boldsymbol{\sigma}}(t)\cdot\boldsymbol{n} &= \boldsymbol{g}_N(t) && \text{on } \Gamma_N \times [0,T], \label{bc:Neumann}
\end{alignat}
where $\boldsymbol{n}$ denotes the outward unit normal vector defined almost everywhere on $\Gamma_N$, and $\boldsymbol{g}_N$ represents the prescribed Neumann data. The initial configuration of the solid is specified by
\begin{equation}
\boldsymbol{u}(0)=\boldsymbol{u}_0,
\qquad
\dot{\boldsymbol{u}}(0)=\boldsymbol{w}_0 .
\label{ic}
\end{equation}

The formulation is completed by introducing a constitutive relation linking the stress to the deformation history. For linear viscoelastic materials this relation is commonly expressed through a Volterra-type hereditary integral, reflecting the fading-memory property of the material \cite{hunter1976mechanics,golden2013boundary}. Accordingly, the stress tensor is given by
\begin{equation}
\ushort{\boldsymbol{\sigma}}(t)
=\ushort{\boldsymbol{D}}\varphi(t)\ushort{\boldsymbol{\varepsilon}}(0)
+\int_0^t\ushort{\boldsymbol{D}}\varphi(t-s)\ushort{\dot{\boldsymbol{\varepsilon}}}(s)\,ds,
\label{eq:consti:visco}
\end{equation}
where $\ushort{\boldsymbol{D}}$ denotes the fourth-order elasticity tensor, assumed positive definite and satisfying the standard minor and major symmetries
\[
D_{ijkl}=D_{jikl}=D_{ijlk}=D_{klij}.
\]
The infinitesimal strain tensor $\ushort{\boldsymbol{\varepsilon}}$ is defined as the symmetric gradient of the displacement field,
\[
\varepsilon_{ij}(\boldsymbol{v})
=\frac{1}{2}\left(
\frac{\partial v_i}{\partial x_j}
+\frac{\partial v_j}{\partial x_i}\right),
\qquad i,j=1,\ldots,d,
\]
and we use the shorthand notation $\ushort{\boldsymbol{\varepsilon}}(t)=\ushort{\boldsymbol{\varepsilon}}(\boldsymbol{u}(t))$ in \eqref{eq:consti:visco}.

The scalar function $\varphi(t)$ characterizes the stress relaxation behaviour and determines the specific viscoelastic material model (see \cite{drozdov1998viscoelastic,findley2013creep,golden2013boundary}). In this work we consider the \emph{generalized Maxwell solid}, for which the relaxation function is represented by a Prony series,
\begin{equation}
\varphi(t)
=\varphi_0
+\sum_{q=1}^{N_\varphi}
\varphi_q e^{-t/\tau_q}.
\label{eq:stressrelax:exponential}
\end{equation}
Here $N_\varphi\in\mathbb{N}$, $\{\tau_q\}_{q=1}^{N_\varphi}$ are strictly positive relaxation times, and $\{\varphi_q\}_{q=0}^{N_\varphi}$ are positive coefficients satisfying the normalization $\varphi(0)=\sum_{q=0}^{\Nphi}\varphi_q=1$.

\subsection{Internal variables}\label{ssec:internal}
Direct evaluation of the Volterra integral in \eqref{eq:consti:visco} requires the complete velocity history to be stored at every spatial point, resulting in high memory usage and computational cost. To avoid this difficulty, we exploit the Prony series representation \eqref{eq:stressrelax:exponential} to obtain a formulation that is local in time. This is achieved by introducing a set of auxiliary internal variables $\{\ushort{\boldsymbol{\zeta}}_q(t)\}_{q=1}^{N_\varphi}$ associated with the individual relaxation mechanisms of the generalized Maxwell solid. 

Specifically, the $q$-th internal variable is defined through the temporal convolution
\begin{equation}
\ushort{\boldsymbol{\zeta}}_q(t)
=\int_0^t\varphi_q e^{-(t-s)/\tau_q}\dot{\boldsymbol{u}}(s)\,ds,
\label{eq:internal_var_def}
\end{equation}
which represents the contribution of the $q$-th relaxation mode to the viscoelastic response. Differentiating \eqref{eq:internal_var_def} with respect to time and applying Leibniz's rule yields the local evolution equation
\begin{equation}
{\tau_q}\ushort{\dot{\boldsymbol{\zeta}}}_q(t)
+\ushort{\boldsymbol{\zeta}}_q(t)
={\tau_q}\varphi_q \dot{\boldsymbol{u}}(t),
\qquad q=1,\dots,N_\varphi,
\label{eq:internal_var_ode}
\end{equation}
subject to the initial condition $\ushort{\boldsymbol{\zeta}}_q(0)=\boldsymbol{0}$.

Since the strain tensor is defined through the symmetric gradient operator, which acts only on the spatial variables, it commutes with the time integration appearing in \eqref{eq:internal_var_def}. Consequently, the stress tensor in \eqref{eq:consti:visco} can be expressed directly in terms of the spatial derivatives of these internal variables. This reformulation replaces the nonlocal history dependence of the Volterra integral by a system of local evolution equations, thereby yielding a Markovian structure that is more amenable to numerical approximation.

Substituting the internal variable representation into the constitutive relation \eqref{eq:consti:visco} and exploiting the Prony series form \eqref{eq:stressrelax:exponential}, the momentum balance equation \eqref{eq:primal:visco} can be expressed in terms of the displacement field and the internal variables. In particular, the governing equation becomes
\begin{equation}
\rho \ddot{\boldsymbol{u}}-\varphi_0\nabla\cdot\ushort{\boldsymbol{D}}\ushort{\boldsymbol{\varepsilon}}(\boldsymbol{u})
-\sum_{q=1}^{N_\varphi}\nabla\cdot\ushort{\boldsymbol{D}}
\ushort{\boldsymbol{\varepsilon}}(\ushort{\boldsymbol{\zeta}}_q)
=\boldsymbol{f}
\label{eq:visco_internal}
\end{equation}
where $\boldsymbol{f}:=\boldsymbol{f}_b+\sum_{q=1}^{N_\varphi} \varphi_qe^{-t/\tau_q}\nabla\cdot\ushort{{\boldsymbol {D}}}\ushort{{\boldsymbol\varepsilon}}(\boldsymbol u_0)$ and  the internal variables $\{\ushort{\boldsymbol{\zeta}}_q\}_{q=1}^{N_\varphi}$ satisfy the evolution equations \eqref{eq:internal_var_ode}. Consequently, the original integro-differential formulation is replaced by a coupled system consisting of the displacement equation \eqref{eq:visco_internal} and the internal-variable evolution equations \eqref{eq:internal_var_ode}. This representation eliminates the global history dependence of the Volterra integral and yields a system that is local in time, which is more suitable for numerical discretization.

\begin{rmk}[Another form of internal variables]
Integration by parts in time in \eqref{eq:consti:visco} yields the equivalent representation
\[
\ushort{\boldsymbol{\sigma}}(t)
=\ushort{\boldsymbol{D}}\ushort{\boldsymbol{\varepsilon}}(t)
+\int_0^t\ushort{\boldsymbol{D}}\dot{\varphi}(t-s)\ushort{\boldsymbol{\varepsilon}}(s)\,ds .
\]
Analogously to \eqref{eq:internal_var_def}, one may introduce an alternative set of internal variables by replacing the convolution term above. This formulation corresponds to the \textit{displacement form} introduced in \cite{jang2023priori}, whereas the definition in \eqref{eq:internal_var_def} is commonly referred to as the \textit{velocity form}. The two formulations are mathematically equivalent; however, the velocity form adopted here is often more convenient computationally, particularly in applications such as viscoelastic fluid mechanics. We refer to \cite{jang2023priori} for further details.
\end{rmk}

\section{Discontinuous Galerkin method}\label{sec:dg}
We now introduce the discontinuous Galerkin discretization of the viscoelastic system \eqref{eq:visco_internal}. To this end, we first define the computational mesh and the associated discontinuous finite element spaces.

We briefly summarize the discontinuous Galerkin framework and refer to \cite{DG} for further details.  
Let $\Omega$ be partitioned into a finite collection of closed elements $E$, where each $E$ is a triangle in two dimensions or a tetrahedron in three dimensions. The intersection of two distinct elements is either empty or consists of a common vertex, edge, or face.
The diameter of an element $E$ is defined by
\[
h_E := \sup_{x,y\in E}\lVert x-y\rVert ,
\]
where $\lVert\cdot\rVert$ denotes the Euclidean norm, and $|E|$ denotes the measure of $E$.  
Similarly, if $e\subset\partial E$ is an edge (in 2D) or a face (in 3D), we denote its measure by $|e|$.

Let $\mathcal{E}_h$ be the set of all elements and define the mesh size
\[
h:=\max_{E\in\mathcal{E}_h} h_E .
\]
We assume the partition is quasi-uniform, meaning that there exists a constant $C>0$ such that
\[
h \le C h_E \qquad \text{for all } E\in\mathcal{E}_h .
\]

Let $\Gamma_h$ denote the set of all interior edges (in 2D) or faces (in 3D).  
For each $e\in\Gamma_h$ we define a unit normal vector $\boldsymbol n_e$.  
If $e\subset\partial\Omega$, then $\boldsymbol n_e$ is the outward unit normal.  
For an interior edge $e\subset E_i\cap E_j$ with $i<j$, the normal $\boldsymbol n_e$ is taken to point from $E_i$ to $E_j$.

We then introduce the broken Sobolev space
\[
H^s(\mathcal{E}_h)
=\left\{
v\in L_2(\Omega) ~ \big|~ v|_E \in H^s(E)
\ \text{for all } E\in\mathcal{E}_h
\right\},
\]
equipped with the norm
\[
\vertiii{v}_{H^s(\mathcal{E}_h)}
=\left(
\sum_{E\in\mathcal{E}_h}
\lVert v\rVert_{H^s(E)}^2
\right)^{1/2}.
\]
Note that $H^s(\Omega)\subset H^s(\mathcal{E}_h)$ and
$H^{s+1}(\mathcal{E}_h)\subset H^s(\mathcal{E}_h)$.
These definitions extend to vector-valued functions.

For an element $E\subset\mathbb{R}^d$ we denote by $\mathcal{P}_k(E)$ the space of polynomials of degree at most $k$,
\[
\mathcal{P}_k(E)
=\mathrm{span}
\left\{
x_1^{i_1}\cdots x_d^{i_d} ~ \big| ~ 
\sum_{m=1}^d i_m \le k
\right\}.
\]
The DG finite element space is then defined by
\[
\mathcal{D}_k(\mathcal{E}_h)
=\left\{
v\in H^1(\mathcal{E}_h)
~ \big| ~ 
v|_E\in\mathcal{P}_k(E)
\ \text{for all }E\in\mathcal{E}_h
\right\}.
\]
The corresponding vector-valued space is defined componentwise.

Let two elements $E_i^e$ and $E_j^e$ share a common edge or face $e$, with $i<j$.  
For a vector field $\boldsymbol v$ and a second-order tensor $\ushort{\boldsymbol v}$ defined on $E_i^e$ and $E_j^e$, we define the average and jump across $e$ by
\[
\{\boldsymbol v\}
=\frac{(\boldsymbol v|_{E_i^e})|_e + (\boldsymbol v|_{E_j^e})|_e}{2},
\qquad
\{\ushort{\boldsymbol v}\}
=\frac{(\ushort{\boldsymbol v}|_{E_i^e})|_e + (\ushort{\boldsymbol v}|_{E_j^e})|_e}{2},
\]
\[
[\boldsymbol v]
=(\boldsymbol v|_{E_i^e})|_e
-(\boldsymbol v|_{E_j^e})|_e,
\qquad
[\boldsymbol v\otimes\boldsymbol n_e]
=(\boldsymbol v|_{E_i^e})|_e\otimes\boldsymbol n_e
-(\boldsymbol v|_{E_j^e})|_e\otimes\boldsymbol n_e .
\]
If $e\subset\partial\Omega$, we set
\[
\{\boldsymbol v\}=\boldsymbol v|_e,
\qquad
[\boldsymbol v]=\boldsymbol v|_e\cdot\boldsymbol n_e,
\qquad
[\boldsymbol v\otimes\boldsymbol n_e]
=
\boldsymbol v|_e\otimes\boldsymbol n_e .
\]

Finally, we define the jump penalty operator
\[
\jump{\boldsymbol v}{\boldsymbol w}
=\sum_{e\subset\Gamma_h\cup\Gamma_D}
\frac{\alpha_0}{|e|^{\beta_0}}
\int_e[\boldsymbol v]\cdot[\boldsymbol w]
\,de ,
\]
where $\alpha_0$ and $\beta_0$ are positive constants.

\subsection{DG bilinear form}\label{ssec:dg}
Using the definitions of jump and average, we can define the DG bilinear form
$a:\Dgv\times\Dgv\to\mathbb{R}$, for $s>3/2$, by
\begin{align}
a(\boldsymbol v,\boldsymbol w)
=&
\sum_{E\in\mathcal{E}_h}\int_E
\ushort{\boldsymbol D}\ushort{\boldsymbol\varepsilon}(\boldsymbol v)
:\ushort{\boldsymbol\varepsilon}(\boldsymbol w)\,dE
-\sum_{e\subset\Gamma_h\cup\Gamma_D}\int_e
\{\ushort{\boldsymbol D}\ushort{\boldsymbol\varepsilon}(\boldsymbol v)\}
:[\boldsymbol w\otimes\boldsymbol n_e]\,de
\nonumber\\
&-\sum_{e\subset\Gamma_h\cup\Gamma_D}\int_e
\{\ushort{\boldsymbol D}\ushort{\boldsymbol\varepsilon}(\boldsymbol w)\}
:[\boldsymbol v\otimes\boldsymbol n_e]\,de
+ J_0^{\alpha_0,\beta_0}(\boldsymbol v,\boldsymbol w),
\label{eq:DG_biform}
\end{align}
for all $\boldsymbol v,\boldsymbol w\in\Dgv$.

We define the associated DG energy norm
\[
\enorm{\boldsymbol v}
=\left(
\sum_{E\in\mathcal{E}_h}
\int_E
\ushort{\boldsymbol D}\ushort{\boldsymbol\varepsilon}(\boldsymbol v)
:\ushort{\boldsymbol\varepsilon}(\boldsymbol v)\,dE
+\jump{\boldsymbol v}{\boldsymbol v}
\right)^{1/2},
\qquad
\boldsymbol v\in\Dgv .
\]
A direct comparison of the definitions yields
\begin{align}
a(\boldsymbol v,\boldsymbol v)
=\enorm{\boldsymbol v}^2
-2\sum_{e\subset\Gamma_h\cup\Gamma_D}
\int_e\{\ushort{\boldsymbol D}\ushort{\boldsymbol\varepsilon}(\boldsymbol v)\}:\njump{\boldsymbol v}\,de .
\label{eq:relation:dgnorm}
\end{align}

Note that using inverse polynomial trace inequality \cite{WARBURTON20032765}, we can obtain bounds for the interior penalty terms. For example, if $\alpha_0$ is sufficiently large and $\beta_0(d-1)\geq1$, we have
\begin{align}
    \sume\bigg|\{\ushort{\boldsymbol D}\ushort{\boldsymbol \varepsilon}(\boldsymbol  v)\}:\njump{\boldsymbol{w}}de\bigg|
    \leq\frac{C}{\sqrt{\alpha_0}}\left(\enorm{\boldsymbol{v}}^2+\jump{\boldsymbol{w}}{\boldsymbol{w}}\right),\label{eq:vector:dg:bdd2}
\end{align}
where $C$ is a positive constant independent of $\boldsymbol{v},\boldsymbol{w}\in\Dgvh$. Consequently, for large enough penalty parameters, the DG bilinear form is coercive and continuous. In other words,
there exist positive constants $\kappa$ and $K$ such that
\begin{align*}
    \kappa\enorm{\boldsymbol{v}}^2\leq\dginner{\boldsymbol{v}}{\boldsymbol{v}},\qquad\text{and}\qquad\left|\dginner{\boldsymbol{v}}{\boldsymbol{w}}\right|\leq K\enorm{\boldsymbol{v}}\enorm{\boldsymbol{w}},\qquad\forall \boldsymbol{v},\boldsymbol{w}\in\Dgvh,
\end{align*}
where $\kappa$ and $K$ are independent of $\boldsymbol{v}$ and $\boldsymbol{w}$.

\subsection{Variational formulation}\label{ssec:weakform}
We are now ready to state the weak formulation of the viscoelastic problem.
Let $\boldsymbol v\in\Dgv$ be a test function. Multiplying the momentum
balance equation \eqref{eq:visco_internal} by $\boldsymbol v$, integrating
over $\Omega$ and adding a jump penalty term for velocity give
the weak formulation of the viscoelastic stated as:
find $\boldsymbol u_h(t)\in\Dgvh$ and
$\{\boldsymbol \zeta_{hq}(t)\}_{q=1}^{N_\varphi}\subset\Dgvh$ such that for all
$\boldsymbol v\in\Dgv$ and $t\in(0,T]$,
\begin{alignat}{2}
\inner{\rho\ddot{\boldsymbol u}_h}{\boldsymbol v}
+\varphi_0
\dginner{\boldsymbol u_h}{\boldsymbol v}
+\sum_{q=1}^{N_\varphi}
\dginner{\boldsymbol \zeta_{hq}}{\boldsymbol v}+\jump{\dot{\boldsymbol{u}}_h}{\boldsymbol{v}}
= F(v),&
\label{eq:weak1}\\
{\tau_{q}}\dot{\boldsymbol \zeta}_{hq}
+\boldsymbol \zeta_{hq}
={\tau_q}\varphi_q\dot{\boldsymbol u}_h,&
\qquad q=1,\ldots,N_\varphi .
\label{eq:weak2}
\end{alignat}
where $F(v)=(\boldsymbol f_0,\boldsymbol v)+\ginner{\boldsymbol g_N}{\boldsymbol v}.$

Let $\boldsymbol{\phi}_i$ be a global basis of $\Dgvh$ such that
\[\Dgvh=\mathrm{span}\{\boldsymbol{\phi}_1,\ldots,\boldsymbol{\phi}_{\ndof}\},\]
where $\ndof$ denotes degrees of freedom. Using the bases, we can write the weak solution as
\begin{equation}
\boldsymbol{u}_h(\boldsymbol{x},t)=\sum_{i=1}^{\ndof}\mathfrak{u}_i(t)\boldsymbol{\phi}_i(\boldsymbol{x}).\label{eq:u:dof}
\end{equation}
In this manner, we can also have, for each $q$,
    \begin{equation}
    \boldsymbol{\zeta}_{hq}(\boldsymbol{x},t)=\sum_{i=1}^{\ndof}z_{qi}(t)\boldsymbol{\phi}_i(\boldsymbol{x}).\label{eq:zeta:dof}
\end{equation}
Also, if we define a mass matrix $M$, a stiffness matrix $A$, a jump matrix $J$ by
\[
M_{ij}=\inner{\boldsymbol{\phi_j}}{\boldsymbol{\phi_i}},\qquad A_{ij}=\dginner{\boldsymbol{\phi_j}}{\boldsymbol{\phi_i}},\qquad\text{and}\qquad J_{ij}=\jump{\boldsymbol{\phi_j}}{\boldsymbol{\phi_i}},\qquad\text{for }i,j=1,\ldots,\ndof,
\]
solving \eqref{eq:weak1} is equivalent to solving the following ODE system:
\begin{equation}
    \rho M\ddot{\boldsymbol{\mathfrak{u}}}(t)+\varphi_0A{\boldsymbol{\mathfrak{u}}}(t)+\sum_{q=1}^{\Nphi}A\boldsymbol{z}_q(t)+J\dot{\boldsymbol{\mathfrak{u}}}(t)=\boldsymbol{b}(t),\label{eq:semi:linear}
\end{equation}
where $\boldsymbol{\mathfrak{u}}(t)=({\mathfrak{u}_i(t)})_{i=1}^{\ndof}$, $\boldsymbol{z}_q(t)=(z_{qi}(t))_{i=1}^{\ndof}$ for each $q$, and $\boldsymbol{b}(t):=(F(t;\boldsymbol{\phi}_i))_{i=1}^{\ndof}$. Also, \eqref{eq:zeta:dof} can be rewritten by
\begin{equation}
    \tau_q \dot{\bs{z}}_q(t)+\bs z_q(t)=\tau_q\varphi_q\dot{\bs{\mathfrak{u}}}(t)\label{eq:semi:zeta},
\end{equation}
in vector form with respect to unknown DOFs. To impose the initial condition, we solve
\begin{equation}
M\dot{\boldsymbol{\mathfrak u}}(0)={\boldsymbol{\mathfrak w}}_0\qq \text{and}\qq A{\boldsymbol{\mathfrak u}}(0)={\boldsymbol{\mathfrak u}}_0,\label{eq:semi:ic}
\end{equation}
where $(\boldsymbol{\mathfrak w}_0)_i=\inner{\bs w_0}{\bs\phi_i}$ and $(\boldsymbol{\mathfrak u}_0)_i=\dginner{\bs u_0}{\bs\phi_i}$. Obviously, we have $\bs z_q(0)=\bs 0$, $\forall q$.

\begin{theorem}
The mass matrix $M$ is symmetric positive definite. Moreover, if $\alpha_0>0$ is sufficiently large and $\beta_0(d-1)\ge 1$, then the SIPG stiffness matrix $A$ is symmetric positive definite and the jump matrix $J$ is symmetric positive semidefinite.
\end{theorem}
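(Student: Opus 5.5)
The plan is to reduce every matrix statement to a statement about the corresponding bilinear form on $\Dgvh$. Fix a coefficient vector $\boldsymbol{c}=(c_i)_{i=1}^{\ndof}\in\mathbb{R}^{\ndof}$ and set $\boldsymbol v_c=\sum_i c_i\boldsymbol\phi_i\in\Dgvh$. By bilinearity,
\[
\boldsymbol c^\top M\boldsymbol c=\inner{\boldsymbol v_c}{\boldsymbol v_c},\qquad
\boldsymbol c^\top A\boldsymbol c=\dginner{\boldsymbol v_c}{\boldsymbol v_c},\qquad
\boldsymbol c^\top J\boldsymbol c=\jump{\boldsymbol v_c}{\boldsymbol v_c}.
\]
Because $\{\boldsymbol\phi_i\}$ is a basis, $\boldsymbol v_c=\boldsymbol 0$ if and only if $\boldsymbol c=\boldsymbol 0$. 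Definiteness of each matrix therefore reduces to definiteness of the associated form on $\Dgvh$.

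\textbf{Symmetry.} For $M$ and $J$ this is immediate, since the $L_2$ inner product and $J_0^{\alpha_0,\beta_0}$ are symmetric in their arguments. For $A$, I will check $a(\boldsymbol v,\boldsymbol w)=a(\boldsymbol w,\boldsymbol v)$ directly from \eqref{eq:DG_biform}. The volume term is symmetric by the major symmetry $D_{ijkl}=D_{klij}$. The two consistency face terms are interchanged when $\boldsymbol v$ and $\boldsymbol w$ are swapped. The penalty term is symmetric.

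\textbf{Definiteness of $M$ and $J$.} For $M$ we have $\boldsymbol c^\top M\boldsymbol c=\llnorm{\boldsymbol v_c}^2>0$ whenever $\boldsymbol c\neq\boldsymbol 0$, because a nonzero piecewise polynomial has nonzero $L_2$ norm. For $J$ we have
\[
\boldsymbol c^\top J\boldsymbol c=\sum_{e\subset\Gamma_h\cup\Gamma_D}\frac{\alpha_0}{|e|^{\beta_0}}\int_e|[\boldsymbol v_c]|^2\,de\ge0,
\]
which gives semidefiniteness. It is not definite in general, since any globally continuous $\boldsymbol v_c$ vanishing on $\Gamma_D$ lies in its kernel.

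\textbf{Definiteness of $A$ (main step).} Here I will use the coercivity $\kappa\enorm{\boldsymbol v}^2\le\dginner{\boldsymbol v}{\boldsymbol v}$ stated earlier. It follows from \eqref{eq:relation:dgnorm} together with \eqref{eq:vector:dg:bdd2} applied with $\boldsymbol w=\boldsymbol v$:
\[
a(\boldsymbol v,\boldsymbol v)\ge\Big(1-\frac{4C}{\sqrt{\alpha_0}}\Big)\enorm{\boldsymbol v}^2,
\]
so $\kappa>0$ once $\alpha_0>16C^2$. The remaining point is that $\enorm{\cdot}$ is a genuine norm on $\Dgvh$, not merely a seminorm. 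Suppose $\enorm{\boldsymbol v}=0$. Then, since $\ushort{\boldsymbol D}$ is positive definite, $\ushort{\boldsymbol\varepsilon}(\boldsymbol v)=0$ on each element, so $\boldsymbol v|_E$ is a rigid motion. All jumps across interior faces vanish, so $\boldsymbol v$ is a single global rigid motion. It also vanishes on $\Gamma_D$, which has positive surface measure. A rigid motion vanishing on a set of positive $(d-1)$-measure is zero: that set is not contained in a lower-dimensional affine subspace, which forces the skew part and the translation to vanish. Hence $\boldsymbol v=\boldsymbol 0$, and $\boldsymbol c^\top A\boldsymbol c\ge\kappa\enorm{\boldsymbol v_c}^2>0$ for $\boldsymbol c\neq\boldsymbol0$.

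The main obstacle is this norm property, the discrete Korn-type kernel argument; the rest is bookkeeping.
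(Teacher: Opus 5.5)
Your proposal is correct and follows essentially the same route as the paper: both reduce $\boldsymbol x^\top M\boldsymbol x$, $\boldsymbol x^\top A\boldsymbol x$ and $\boldsymbol x^\top J\boldsymbol x$ to the $L_2$ norm, the SIPG form and the penalty form evaluated at $\boldsymbol v_h=\sum_i x_i\boldsymbol\phi_i$, and then use coercivity for $A$ and nonnegativity of the penalty for $J$. You also supply details the paper leaves implicit: the symmetry check, the explicit coercivity constant $1-4C/\sqrt{\alpha_0}$, and the rigid-motion argument that $\enorm{\cdot}$ is a genuine norm on $\Dgvh$ (this step uses $[\boldsymbol v]=\boldsymbol v|_e$ on Dirichlet faces, which is evidently the intended definition, rather than the normal component $\boldsymbol v|_e\cdot\boldsymbol n_e$ literally written in the paper).
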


\begin{proof}
For any $\boldsymbol{x}\in\mathbb{R}^{\ndof}$, set
$\boldsymbol{v}_h=\sum\limits_{i=1}^{\ndof}x_i\boldsymbol{\phi}_i$.
Then
$
\boldsymbol{x}^\top M\boldsymbol{x}
=
(\boldsymbol{v}_h,\boldsymbol{v}_h)
=
\lVert \boldsymbol{v}_h\rVert_{L_2(\Omega)}^2$.
Since $\{\boldsymbol{\phi}_i\}_{i=1}^{\ndof}$ is a basis, $\boldsymbol{v}_h=\bs 0$ implies $\boldsymbol{x}=\bs 0$. Hence $M$ is symmetric positive definite.
Similarly,
\[
\boldsymbol{x}^\top A\boldsymbol{x}
=
\dginner{\boldsymbol{v}_h}{\boldsymbol{v}_h},\q \text{and}\q \boldsymbol{x}^\top J\boldsymbol{x}
=
\jump{\boldsymbol{v}_h}{\boldsymbol{v}_h}
.
\]
By coercivity of the SIPG bilinear form, for large $\alpha_0$ and $\beta_0$,
$A$ is symmetric positive definite and therefore invertible. However, $J$ is symmetric positive semidefinite. In general $J$ has a nontrivial kernel, since any conforming function with zero jump satisfies $\jump{\boldsymbol{v}_h}{\boldsymbol{v}_h}=0$. Hence $J$ is not invertible in general.
\end{proof}

To obtain a first-order dynamical system, we introduce the velocity variable $\boldsymbol{\mathfrak w}(t)
=
\dot{\boldsymbol{\mathfrak u}}(t)$.
Since $M$ is invertible, 
the system \eqref{eq:semi:linear}-\eqref{eq:semi:zeta} can be expressed compactly as
\begin{equation}
\dot{\boldsymbol y}(t)
=
\mathcal A\boldsymbol y(t)
+
\boldsymbol g(t),
\label{eq:firstorder:compact}
\end{equation}
where $I,O\in\mathbb R^{\ndof\times \ndof}$
denote the identity and zero matrices, respectively, and
\[
\boldsymbol y
=
\begin{bmatrix}
\boldsymbol{\mathfrak u}\\
\boldsymbol{\mathfrak w}\\
\boldsymbol z_1\\
\vdots\\
\boldsymbol z_{N_\varphi}
\end{bmatrix}, \
\boldsymbol g
=
\begin{bmatrix}
\boldsymbol 0\\
\rho^{-1}M^{-1}\boldsymbol b\\
\boldsymbol 0\\
\vdots\\
\boldsymbol 0
\end{bmatrix},
\text{ and }
\mathcal A
=
\begin{bmatrix}
O & I & O & \cdots & O
\\
-\rho^{-1}\varphi_0M^{-1}A
&
-\rho^{-1}M^{-1}J
&
-\rho^{-1}M^{-1}A
&
\cdots
&
-\rho^{-1}M^{-1}A
\\
O
&
\varphi_1 I
&
-\tau_1^{-1}I
&
&
O
\\
\vdots
&
\vdots
&
&
\ddots
&
\vdots
\\
O
&
\varphi_{N_\varphi} I
&
O
&
\cdots
&
-\tau_{N_\varphi}^{-1}I
\end{bmatrix}.
\]

\begin{theorem}[Well-posedness of the semi-discrete system]
Let
$\boldsymbol b\in C([0,T];\mathbb R^{\ndof})$. Then, for any initial data
and
$\boldsymbol z_q(0)=\boldsymbol 0$, $q=1,\ldots,N_\varphi$, the semi-discrete system
\eqref{eq:semi:linear}-\eqref{eq:semi:zeta} admits a unique solution on $[0,T]$.
\end{theorem}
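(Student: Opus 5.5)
We reduce the problem to the first-order system \eqref{eq:firstorder:compact} and apply standard linear ODE theory. By the previous theorem, $M$ is symmetric positive definite and hence invertible. The matrix $\mathcal A$ is therefore a well-defined constant matrix in $\mathbb R^{(N_\varphi+2)\ndof\times(N_\varphi+2)\ndof}$. Since $\boldsymbol b\in C([0,T];\mathbb R^{\ndof})$, the forcing term $\boldsymbol g$ is continuous on $[0,T]$.

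The first step is to verify that \eqref{eq:semi:linear}--\eqref{eq:semi:zeta} is equivalent to \eqref{eq:firstorder:compact} with $\boldsymbol{\mathfrak w}=\dot{\boldsymbol{\mathfrak u}}$. Multiplying \eqref{eq:semi:linear} by $\rho^{-1}M^{-1}$ gives the second block row. Dividing \eqref{eq:semi:zeta} by $\tau_q$ gives $\dot{\boldsymbol z}_q=\varphi_q\boldsymbol{\mathfrak w}-\tau_q^{-1}\boldsymbol z_q$, which is the $(q+2)$-th block row. Conversely, every $C^1$ solution $\boldsymbol y$ of \eqref{eq:firstorder:compact} yields $\boldsymbol{\mathfrak u}\in C^2$ with $\dot{\boldsymbol{\mathfrak u}}=\boldsymbol{\mathfrak w}$. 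Multiplying the second block row back by $\rho M$ then recovers \eqref{eq:semi:linear}.

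The initial data are determined as follows. Since $A$ and $M$ are invertible, \eqref{eq:semi:ic} fixes $\boldsymbol{\mathfrak u}(0)=A^{-1}\boldsymbol{\mathfrak u}_0$ and $\boldsymbol{\mathfrak w}(0)=M^{-1}\boldsymbol{\mathfrak w}_0$. Together with $\boldsymbol z_q(0)=\boldsymbol 0$, this gives $\boldsymbol y(0)=\boldsymbol y_0$. More generally, any prescribed initial data give some $\boldsymbol y_0$.

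Existence and uniqueness now follow from the variation-of-constants formula
\[
\boldsymbol y(t)=e^{t\mathcal A}\boldsymbol y_0+\int_0^t e^{(t-s)\mathcal A}\boldsymbol g(s)\,ds .
\]
This defines a $C^1$ solution on $[0,T]$ because the integrand is continuous. Uniqueness holds because the right-hand side $\mathcal A\boldsymbol y+\boldsymbol g$ is globally Lipschitz in $\boldsymbol y$ with constant $\|\mathcal A\|$, so the Picard--Lindel\"of theorem applies. Alternatively, the difference $\boldsymbol e$ of two solutions satisfies $\dot{\boldsymbol e}=\mathcal A\boldsymbol e$ with $\boldsymbol e(0)=\boldsymbol 0$, hence $\boldsymbol e\equiv\boldsymbol 0$.

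There is no genuine obstacle in this argument. The only point requiring care is the invertibility of $M$, and of $A$ for the initial displacement, which rests on the previous theorem and its penalty assumptions. One could also give an energy-based uniqueness proof, which would anticipate the later stability estimate. That proof tests \eqref{eq:semi:linear} with $\dot{\boldsymbol{\mathfrak u}}$ and uses the symmetry of $A$, but it is not needed for this statement.
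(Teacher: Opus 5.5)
Your proposal is correct and follows essentially the paper's route: use the invertibility of $M$ to rewrite the system as the first-order linear system \eqref{eq:firstorder:compact} with constant matrix $\mathcal A$ and continuous forcing $\boldsymbol g$, then invoke Picard--Lindel\"of for a globally Lipschitz right-hand side. Your explicit variation-of-constants formula and your check that the first-order and second-order formulations are equivalent only spell out steps the paper leaves implicit.
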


\begin{proof}
Since $M$ is invertible, we can derive
the first-order system
\[
\dot{\boldsymbol y}(t)
=
\mathcal A\boldsymbol y(t)
+
\boldsymbol g(t),
\]
where $\mathcal A\in\mathbb R^{(N_\varphi+2)\ndof\times (N_\varphi+2)\ndof}$ is a constant matrix and
$\boldsymbol g\in C([0,T];\mathbb R^{(N_\varphi+2)\ndof})$. The right-hand side is globally Lipschitz continuous with respect to $\boldsymbol y$ and continuous in time. Therefore the Picard-Lindel\"of theorem implies the existence and uniqueness of a solution on $[0,T]$.
\end{proof}

The preceding results guarantee the existence and uniqueness of the semi-discrete solution. We next turn to the analysis of its approximation properties. The objective is to derive \textit{a priori} bounds for the discretization error and to establish convergence rates in terms of the mesh size $h$.

\subsection{\textit{A priori} analysis}\label{ssec:apriori}
We follow the standard error-splitting argument used for discontinuous Galerkin approximations of viscoelastic problems; see, for example, \cite{jang2023priori,jang2024discontinuous}. We first collect the auxiliary estimates needed in the subsequent \textit{a priori} analysis. Hereafter, we assume sufficient regularity of the data:
\[
\boldsymbol f\in L_2(0,T;\boldsymbol L_2(\Omega)),\quad
\boldsymbol g_N\in C^1(0,T;\boldsymbol L_2(\Gamma_N)),\quad
\boldsymbol u_0\in \boldsymbol H^s(\Omega),\quad
\boldsymbol w_0\in \boldsymbol L_2(\Omega).
\]

\begin{lemma}
Assume that $\alpha_0>0$ and $\beta_0(d-1)\ge 1$. Then the following estimates hold.
For any $\boldsymbol v\in \boldsymbol H^1(\mathcal E_h)$, there exists a constant $C>0$, independent of $\boldsymbol v$ and $h$, such that
\begin{equation}
\sum_{E\in\mathcal E_h}
\lVert \nabla \boldsymbol v\rVert_{L_2(E)}^2
\le
C\enorm{\boldsymbol v}^2 .
\label{eq:kordDG}
\end{equation}
Moreover, for any $\boldsymbol v,\boldsymbol w\in\Dgvh$ and for any interior face $e$ shared by two elements $E_1,E_2\in\mathcal E_h$,
\begin{align}
\left|
\int_e
\{\ushort{\boldsymbol D}\ushort{\boldsymbol\varepsilon}(\boldsymbol v)\}
:
\njump{\boldsymbol w}\,de
\right|
\le
\frac{C}{\sqrt{\alpha_0}}
\bigg(
&
\lVert
\ushort{\boldsymbol D}\ushort{\boldsymbol\varepsilon}(\boldsymbol v)
\rVert_{L_2(E_1)}^2
+
\lVert
\ushort{\boldsymbol D}\ushort{\boldsymbol\varepsilon}(\boldsymbol v)
\rVert_{L_2(E_2)}^2
+
\frac{\alpha_0}{|e|^{\beta_0}}
\lVert[\boldsymbol w]\rVert_{L_2(e)}^2
\bigg).
\label{eq:vector:dg:bdd1}
\end{align}
\end{lemma}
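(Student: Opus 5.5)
The lemma has two parts, and I would handle them separately: \eqref{eq:kordDG} is a broken Korn inequality, and \eqref{eq:vector:dg:bdd1} is a local face estimate obtained from a discrete trace inequality and Young's inequality.

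For \eqref{eq:kordDG}, I would invoke the broken Korn inequality for piecewise $\boldsymbol H^1$ functions (Brenner's Korn inequality for piecewise $H^1$ vector fields). It states that
\[
\sum_{E}\lVert\nabla\boldsymbol v\rVert_{L_2(E)}^2\le C\Big(\sum_E\lVert\ushort{\boldsymbol\varepsilon}(\boldsymbol v)\rVert_{L_2(E)}^2+\sum_{e\subset\Gamma_h\cup\Gamma_D}h_e^{-1}\lVert[\boldsymbol v]\rVert_{L_2(e)}^2\Big).
\]
Here the boundary jumps on $\Gamma_D$ fix the rigid motions, which is possible because $\Gamma_D$ has positive measure.

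The plan is then to bound each term on the right by $\enorm{\boldsymbol v}^2$:
\begin{itemize}
\item Positive definiteness of $\ushort{\boldsymbol D}$ on symmetric tensors gives $\lVert\ushort{\boldsymbol\varepsilon}(\boldsymbol v)\rVert_{L_2(E)}^2\le C\int_E\ushort{\boldsymbol D}\ushort{\boldsymbol\varepsilon}(\boldsymbol v):\ushort{\boldsymbol\varepsilon}(\boldsymbol v)$.
\item Quasi-uniformity gives $|e|\simeq h^{d-1}$ and $h_e\simeq h$. With $\beta_0(d-1)\ge1$ and $h\le h_0$ (or $\operatorname{diam}\Omega$) bounded, $h_e^{-1}\le C|e|^{-\beta_0}$, so the jump term is bounded by $C\alpha_0^{-1}\jump{\boldsymbol v}{\boldsymbol v}$.
\end{itemize}
Combining these gives \eqref{eq:kordDG}, with $C$ depending on $\alpha_0$ but not on $h$.

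The main obstacle is that the broken Korn inequality is needed for general $\boldsymbol v\in\boldsymbol H^1(\mathcal E_h)$, not only for polynomials. Brenner's version covers exactly that case, so I would cite it rather than reprove it.

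For \eqref{eq:vector:dg:bdd1}, I would start from Cauchy--Schwarz on $e$:
\[
\Big|\int_e\{\ushort{\boldsymbol D}\ushort{\boldsymbol\varepsilon}(\boldsymbol v)\}:\njump{\boldsymbol w}\Big|\le\lVert\{\ushort{\boldsymbol D}\ushort{\boldsymbol\varepsilon}(\boldsymbol v)\}\rVert_{L_2(e)}\lVert[\boldsymbol w]\rVert_{L_2(e)},
\]
using $|[\boldsymbol w\otimes\boldsymbol n_e]|=|[\boldsymbol w]|$. Since $\ushort{\boldsymbol D}\ushort{\boldsymbol\varepsilon}(\boldsymbol v)|_{E_i}$ is a polynomial of degree $k-1$, the inverse trace inequality of \cite{WARBURTON20032765} gives
\[
\lVert\cdot\rVert_{L_2(e)}^2\le C\frac{|e|}{|E_i|}\lVert\cdot\rVert_{L_2(E_i)}^2\le Ch^{-1}\lVert\cdot\rVert_{L_2(E_i)}^2 .
\]
Writing the product as
\[
\Big(\alpha_0^{-1/2}|e|^{\beta_0/2}\lVert\{\cdot\}\rVert_{L_2(e)}\Big)\Big(\alpha_0^{1/2}|e|^{-\beta_0/2}\lVert[\boldsymbol w]\rVert_{L_2(e)}\Big),
\]
I would apply Young's inequality with weight $\sqrt{\alpha_0}$. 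The result is
\[
\frac{1}{\sqrt{\alpha_0}}\Big(|e|^{\beta_0}\lVert\{\cdot\}\rVert_{L_2(e)}^2+\frac{\alpha_0}{|e|^{\beta_0}}\lVert[\boldsymbol w]\rVert_{L_2(e)}^2\Big)\cdot\tfrac12 .
\]

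The remaining step is to check that $|e|^{\beta_0}h^{-1}\le C$. This holds because $|e|^{\beta_0}\simeq h^{\beta_0(d-1)}\le Ch$ when $\beta_0(d-1)\ge1$ and $h$ is bounded. With this, the averaged trace term is controlled by $C\big(\lVert\ushort{\boldsymbol D}\ushort{\boldsymbol\varepsilon}(\boldsymbol v)\rVert_{L_2(E_1)}^2+\lVert\ushort{\boldsymbol D}\ushort{\boldsymbol\varepsilon}(\boldsymbol v)\rVert_{L_2(E_2)}^2\big)$, which yields \eqref{eq:vector:dg:bdd1}.
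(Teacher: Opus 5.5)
Your proposal is correct and follows essentially the same route as the paper, which gives no argument of its own. The paper cites Brenner's Korn inequality for piecewise $\boldsymbol H^1$ fields for \eqref{eq:kordDG}, and the standard inverse-trace-plus-Young argument of its earlier works for \eqref{eq:vector:dg:bdd1}. One caveat: you tacitly take the full trace $[\boldsymbol v]=\boldsymbol v|_e$ on faces in $\Gamma_D$, which is what Brenner's inequality requires, rather than the paper's literal boundary convention $[\boldsymbol v]=\boldsymbol v|_e\cdot\boldsymbol n_e$. Under that literal convention the boundary penalty would not control, for example, a rigid rotation about the normal of a single flat Dirichlet face in three dimensions.
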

The estimate \eqref{eq:kordDG} follows from the discrete Korn inequality
\cite{DG,brenner2004korn}, while \eqref{eq:vector:dg:bdd1}
is a standard bound for the interior penalty terms; see
\cite{jang2020spatially,jang2023priori}.

\begin{theorem}[Stability of the semi-discrete problem]
Assume that $\alpha_0>0$ is sufficiently large and $\beta_0(d-1)\ge 1$ so that the SIPG bilinear form is coercive. Suppose that
\begin{align*}
\boldsymbol u_h \in H^2(0,T;L_2(\Omega))\cap W^1_\infty(0,T;\Dgvh),\qq
\boldsymbol \zeta_{hq} \in W^1_\infty(0,T;\Dgvh),
\qquad q=1,\ldots,N_\varphi,
\end{align*}
and that $(\boldsymbol u_h,\{\boldsymbol\zeta_{hq}\}_{q=1}^{N_\varphi})$ satisfies
\eqref{eq:weak1}-\eqref{eq:weak2}. Then there exists a positive constant $C$, independent of $h$, such that, for all $t\in[0,T]$,
\begin{align}
\lVert \rho^{1/2}&\dot{\boldsymbol u}_h(t)\rVert_{L_2(\Omega)}^2
+
\enorm{\boldsymbol u_h(t)}^2
+
\sum_{q=1}^{N_\varphi}
\enorm{\boldsymbol\zeta_{hq}(t)}^2
+
\sum_{q=1}^{N_\varphi}
\int_0^t
\enorm{\boldsymbol\zeta_{hq}(s)}^2\,ds
+
\int_0^t
\jump{\dot{\boldsymbol u}_h(s)}{\dot{\boldsymbol u}_h(s)}\,ds
\nonumber\\
\le&
CT
\bigg(
\lVert \rho^{1/2}\boldsymbol w_0\rVert_{L_2(\Omega)}^2
+
\enorm{\boldsymbol u_0}^2
+
\lVert \boldsymbol f\rVert_{L_2(0,T;L_2(\Omega))}^2
+
h^{-1}\lVert \boldsymbol g_N\rVert_{L_\infty(0,T;L_2(\Gamma_N))}^2\nonumber\\
&+
h^{-1}\lVert \dot{\boldsymbol g}_N\rVert_{L_2(0,T;L_2(\Gamma_N))}^2
\bigg).
\label{eq:sd:stability}
\end{align}
Here $C$ depends on $\Omega$, the polynomial degree, and the material parameters, but is independent of $h$ and of the semi-discrete solution.
\end{theorem}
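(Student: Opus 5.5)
We test \eqref{eq:weak1} with $\boldsymbol v=\dot{\boldsymbol u}_h$ and use \eqref{eq:weak2} to express the velocity in the internal-variable coupling. The aim is an exact energy identity in which the viscous terms enter with a sign, so that no Gr\"onwall argument is needed.

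\textbf{Energy identity.} Testing \eqref{eq:weak1} with $\dot{\boldsymbol u}_h$ gives
\[
\tfrac12\tfrac{d}{dt}\Big(\lVert\rho^{1/2}\dot{\boldsymbol u}_h\rVert^2_{L_2(\Omega)}+\varphi_0\,a(\boldsymbol u_h,\boldsymbol u_h)\Big)+\jump{\dot{\boldsymbol u}_h}{\dot{\boldsymbol u}_h}+\sum_q a(\boldsymbol\zeta_{hq},\dot{\boldsymbol u}_h)=F(\dot{\boldsymbol u}_h).
\]
From \eqref{eq:weak2} we have $\dot{\boldsymbol u}_h=\varphi_q^{-1}\big(\dot{\boldsymbol\zeta}_{hq}+\tau_q^{-1}\boldsymbol\zeta_{hq}\big)$. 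Since $a$ is symmetric, this yields
\[
a(\boldsymbol\zeta_{hq},\dot{\boldsymbol u}_h)=\tfrac{1}{2\varphi_q}\tfrac{d}{dt}a(\boldsymbol\zeta_{hq},\boldsymbol\zeta_{hq})+\tfrac{1}{\varphi_q\tau_q}a(\boldsymbol\zeta_{hq},\boldsymbol\zeta_{hq}).
\]
We integrate over $(0,t)$, use $\boldsymbol\zeta_{hq}(0)=\boldsymbol 0$, and apply coercivity $a(\boldsymbol v,\boldsymbol v)\ge\kappa\enorm{\boldsymbol v}^2$. The left-hand side then controls every quantity in \eqref{eq:sd:stability}, and the dissipative terms $\int_0^t\enorm{\boldsymbol\zeta_{hq}}^2$ and $\int_0^t\jump{\dot{\boldsymbol u}_h}{\dot{\boldsymbol u}_h}$ carry positive signs. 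On the initial-data side, continuity of $a$ bounds $a(\boldsymbol u_h(0),\boldsymbol u_h(0))$ by $C\enorm{\boldsymbol u_0}^2$ through the projection \eqref{eq:semi:ic}, and $\lVert\dot{\boldsymbol u}_h(0)\rVert\le\lVert\boldsymbol w_0\rVert$ since it is an $L_2$-projection.

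\textbf{Body force.} We estimate
\[
\int_0^t(\boldsymbol f,\dot{\boldsymbol u}_h)\le \tfrac{T}{2\epsilon}\lVert\boldsymbol f\rVert^2_{L_2(0,T;L_2)}+\tfrac{\epsilon}{2T}\int_0^t\lVert\dot{\boldsymbol u}_h\rVert^2\le\tfrac{T}{2\epsilon}\lVert\boldsymbol f\rVert^2+\tfrac{\epsilon}{2}\sup_{s\le t}\lVert\dot{\boldsymbol u}_h(s)\rVert^2 .
\]
We then take the supremum over $t$ on both sides of the integrated identity and absorb the last term. This is where the factor $T$ arises, rather than an exponential.

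\textbf{Neumann term.} This is the main obstacle, because $\dot{\boldsymbol u}_h$ has no controlled trace. We integrate by parts in time:
\[
\int_0^t\ginner{\boldsymbol g_N}{\dot{\boldsymbol u}_h}=\ginner{\boldsymbol g_N(t)}{\boldsymbol u_h(t)}-\ginner{\boldsymbol g_N(0)}{\boldsymbol u_h(0)}-\int_0^t\ginner{\dot{\boldsymbol g}_N}{\boldsymbol u_h}.
\]
For the trace of a DG function we use the discrete trace inequality $\gnorm{\boldsymbol v}^2\le Ch^{-1}\lVert\boldsymbol v\rVert^2_{L_2(\Omega)}$ (quasi-uniformity). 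Combined with the DG Poincar\'e bound $\lVert\boldsymbol v\rVert_{L_2}\le C\enorm{\boldsymbol v}$, which follows from \eqref{eq:kordDG} and $|\Gamma_D|>0$, this gives $\gnorm{\boldsymbol v}^2\le Ch^{-1}\enorm{\boldsymbol v}^2$, the source of the $h^{-1}$ factors.

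The endpoint term is handled by Young's inequality with a small weight and absorbed into $\varphi_0\kappa\enorm{\boldsymbol u_h(t)}^2$. The time integral is bounded analogously to $\boldsymbol f$:
\[
\le Th^{-1}C_\epsilon\lVert\dot{\boldsymbol g}_N\rVert^2_{L_2(0,T;L_2(\Gamma_N))}+\epsilon\sup_s\enorm{\boldsymbol u_h(s)}^2 .
\]
Taking suprema and collecting terms then yields \eqref{eq:sd:stability} with a constant proportional to $\max(1,T)$.
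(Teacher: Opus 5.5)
Your proposal is correct and follows essentially the same route as the paper. You test with $\dot{\boldsymbol u}_h$, use \eqref{eq:weak2} to rewrite $a(\boldsymbol\zeta_{hq},\dot{\boldsymbol u}_h)$ as an exact derivative plus a dissipative term, and integrate the Neumann term by parts in time using the inverse trace bound $\gnorm{\boldsymbol v}^2\le Ch^{-1}\enorm{\boldsymbol v}^2$. You then absorb the $T$-weighted Young terms after taking the supremum in time; where the paper goes through \eqref{eq:relation:dgnorm} and \eqref{eq:vector:dg:bdd2}, you invoke coercivity directly, which amounts to the same thing.

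Your constant proportional to $\max(1,T)$ is also what the paper's argument actually delivers, since its final bound carries factors $1$, $T$ and $T+1$. The factor $CT$ in the statement should therefore be read as $C\max(1,T)$: for small $T$, the left-hand side at $t=0$ cannot be bounded by $CT$ times the initial data.
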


\begin{proof}
Taking $\boldsymbol v=\dot{\boldsymbol u}_h$ in \eqref{eq:weak1} gives
\begin{align}
(\rho\ddot{\boldsymbol u}_h,\dot{\boldsymbol u}_h)
+\varphi_0\dginner{\boldsymbol u_h}{\dot{\boldsymbol u}_h}
+\sum_{q=1}^{N_\varphi}\dginner{\boldsymbol\zeta_{hq}}{\dot{\boldsymbol u}_h}
+\jump{\dot{\boldsymbol u}_h}{\dot{\boldsymbol u}_h}
=
F(t;\dot{\boldsymbol u}_h).
\label{eq:sd:stab:proof1}
\end{align}
Next, for each $q$, taking the DG inner product of \eqref{eq:weak2} with
\[
\boldsymbol v=\frac{1}{\tau_q\varphi_q}\boldsymbol\zeta_{hq}
\]
yields
\begin{equation}
\dginner{\boldsymbol\zeta_{hq}}{\dot{\boldsymbol u}_h}
=\frac{1}{2\varphi_q}
\frac{d}{dt}
\dginner{\boldsymbol\zeta_{hq}}{\boldsymbol\zeta_{hq}}
+\frac{1}{\tau_q\varphi_q}
\dginner{\boldsymbol\zeta_{hq}}{\boldsymbol\zeta_{hq}} .
\label{eq:sd:stab:zeta}
\end{equation}
Substituting \eqref{eq:sd:stab:zeta} into \eqref{eq:sd:stab:proof1} and using the symmetry of the SIPG bilinear form, we obtain
\begin{align}
\frac{d}{dt}&
\bigg[
\frac{\rho}{2}\lVert\dot{\boldsymbol u}_h\rVert_{L_2(\Omega)}^2
+\frac{\varphi_0}{2}\dginner{\boldsymbol u_h}{\boldsymbol u_h}
+\frac{1}{2}
\sum_{q=1}^{N_\varphi}
\frac{1}{\varphi_q}
\dginner{\boldsymbol\zeta_{hq}}{\boldsymbol\zeta_{hq}}
\bigg]
+\sum_{q=1}^{N_\varphi}
\frac{1}{\tau_q\varphi_q}
\dginner{\boldsymbol\zeta_{hq}}{\boldsymbol\zeta_{hq}}
+\jump{\dot{\boldsymbol u}_h}{\dot{\boldsymbol u}_h}\nonumber\\
=&F(t;\dot{\boldsymbol u}_h).
\label{eq:sd:energy:identity}
\end{align}
The remaining proof follows by integrating \eqref{eq:sd:energy:identity} over $(0,t)$, applying the coercivity and continuity of the DG bilinear form, and estimating the linear functional $F$ by the Cauchy-Schwarz, trace, and Young inequalities. 

Integrating \eqref{eq:sd:energy:identity} over $(0,t)$ and using the initial condition
$\boldsymbol\zeta_{hq}(0)=\boldsymbol0$ give
\begin{align}
\frac{\rho}{2}&
\lVert\dot{\boldsymbol u}_h(t)\rVert_{L_2(\Omega)}^2
+\frac{\varphi_0}{2}
\dginner{\boldsymbol u_h(t)}{\boldsymbol u_h(t)}
+\frac{1}{2}
\sum_{q=1}^{N_\varphi}
\frac{1}{\varphi_q}
\dginner{\boldsymbol\zeta_{hq}(t)}{\boldsymbol\zeta_{hq}(t)}
\nonumber\\
&+\sum_{q=1}^{N_\varphi}
\frac{1}{\tau_q\varphi_q}
\int_0^t\dginner{\boldsymbol\zeta_{hq}(s)}{\boldsymbol\zeta_{hq}(s)}\,ds
+\int_0^t\jump{\dot{\boldsymbol u}_h(s)}{\dot{\boldsymbol u}_h(s)}\,ds
\nonumber\\
=&
\frac{\rho}{2}
\lVert\boldsymbol w_0\rVert_{L_2(\Omega)}^2
+\frac{\varphi_0}{2}
\dginner{\boldsymbol u_0}{\boldsymbol u_0}
+\int_0^tF(s;\dot{\boldsymbol u}_h(s))\,ds .
\label{eq:sd:energy:integrated}
\end{align}
Using \eqref{eq:relation:dgnorm}, we rewrite the DG bilinear terms in
\eqref{eq:sd:energy:integrated} in terms of the DG energy norm. This gives
\begin{align}
\frac{\rho}{2}&
\lVert\dot{\boldsymbol u}_h(t)\rVert_{L_2(\Omega)}^2
+\frac{\varphi_0}{2}
\enorm{\boldsymbol u_h(t)}^2
+\frac{1}{2}
\sum_{q=1}^{N_\varphi}
\frac{1}{\varphi_q}
\enorm{\boldsymbol\zeta_{hq}(t)}^2
\nonumber\\
&+\sum_{q=1}^{N_\varphi}
\frac{1}{\tau_q\varphi_q}
\int_0^t\enorm{\boldsymbol\zeta_{hq}(s)}^2\,ds
+\int_0^t\jump{\dot{\boldsymbol u}_h(s)}{\dot{\boldsymbol u}_h(s)}\,ds
\nonumber\\
=&\frac{\rho}{2}
\lVert\boldsymbol w_0\rVert_{L_2(\Omega)}^2
+\frac{\varphi_0}{2}
\dginner{\boldsymbol u_0}{\boldsymbol u_0}
+\int_0^tF(s;\dot{\boldsymbol u}_h(s))\,ds+\varphi_0\sum_{e\subset\Gamma_h\cup\Gamma_D}\int_e\{\ushort{\boldsymbol D}\ushort{\boldsymbol\varepsilon}(\boldsymbol u_h(t))\}:\njump{\boldsymbol u_h(t)}\,de
\nonumber\\
&+\sum_{q=1}^{N_\varphi}\frac{1}{\varphi_q}\sum_{e\subset\Gamma_h\cup\Gamma_D}\int_e\{\ushort{\boldsymbol D}\ushort{\boldsymbol\varepsilon}(\boldsymbol\zeta_{hq}(t))\}:\njump{\boldsymbol\zeta_{hq}(t)}\,de
\nonumber\\
&+2\sum_{q=1}^{N_\varphi}\frac{1}{\tau_q\varphi_q}\int_0^t\sum_{e\subset\Gamma_h\cup\Gamma_D}\int_e\{\ushort{\boldsymbol D}\ushort{\boldsymbol\varepsilon}(\boldsymbol\zeta_{hq}(s))\}:\njump{\boldsymbol\zeta_{hq}(s)}\,de\,ds .
\label{eq:sd:energy:enorm}
\end{align}
By \eqref{eq:vector:dg:bdd2}, the last three terms on the right-hand side of
\eqref{eq:sd:energy:enorm} are bounded by
\begin{align}
&
\frac{C\varphi_0}{\sqrt{\alpha_0}}
\enorm{\boldsymbol u_h(t)}^2
+
\sum_{q=1}^{N_\varphi}
\frac{C}{\varphi_q\sqrt{\alpha_0}}
\enorm{\boldsymbol\zeta_{hq}(t)}^2
+
\sum_{q=1}^{N_\varphi}
\frac{C}{\tau_q\varphi_q\sqrt{\alpha_0}}
\int_0^t
\enorm{\boldsymbol\zeta_{hq}(s)}^2\,ds .
\label{eq:sd:penalty:bound}
\end{align}
Combining \eqref{eq:sd:energy:enorm} and \eqref{eq:sd:penalty:bound}, and using the continuity of the DG bilinear form for the initial term, we obtain
\begin{align}
&
\frac{\rho}{2}
\llnorm{\dot{\boldsymbol u}_h(t)}^2
+
\left(
\frac{\varphi_0}{2}
-
\frac{C\varphi_0}{\sqrt{\alpha_0}}
\right)
\enorm{\boldsymbol u_h(t)}^2
+
\sum_{q=1}^{\Nphi}
\left(
\frac{1}{2\varphi_q}
-
\frac{C}{\varphi_q\sqrt{\alpha_0}}
\right)
\enorm{\boldsymbol\zeta_{hq}(t)}^2
\nonumber\\
&
+
\sum_{q=1}^{\Nphi}
\left(
\frac{1}{\tau_q\varphi_q}
-
\frac{C}{\tau_q\varphi_q\sqrt{\alpha_0}}
\right)
\int_0^t
\enorm{\boldsymbol\zeta_{hq}(s)}^2\,ds
+
\int_0^t
\jump{\dot{\boldsymbol u}_h(s)}{\dot{\boldsymbol u}_h(s)}\,ds
\nonumber\\
&\le
\frac{\rho}{2}
\llnorm{\boldsymbol w_0}^2
+
C\enorm{\boldsymbol u_0}^2
+
\int_0^t
F(s;\dot{\boldsymbol u}_h(s))\,ds .
\label{eq:sd:stab:preF}
\end{align}
It remains to estimate the forcing term. Since
\[
F(t;\boldsymbol v)
=
\inner{\boldsymbol f(t)}{\boldsymbol v}
+
\ginner{\boldsymbol g_N(t)}{\boldsymbol v},
\]
we have
\begin{align}
\int_0^t F(s;\dot{\boldsymbol u}_h(s))\,ds
&=
\int_0^t
\inner{\boldsymbol f(s)}{\dot{\boldsymbol u}_h(s)}\,ds
+
\int_0^t
\ginner{\boldsymbol g_N(s)}{\dot{\boldsymbol u}_h(s)}\,ds .
\label{eq:sd:Fsplit}
\end{align}
For the body-force term, the Cauchy-Schwarz and Young inequalities give, for any $\epsilon_a>0$,
\begin{align}
\int_0^t
\inner{\boldsymbol f(s)}{\dot{\boldsymbol u}_h(s)}\,ds
\le
\frac{1}{2\epsilon_a}
\Llnorm{\boldsymbol f}^2
+
\frac{\epsilon_a}{2}
\int_0^t
\llnorm{\dot{\boldsymbol u}_h(s)}^2\,ds .
\label{eq:sd:fbound}
\end{align}
For the boundary term, integration by parts in time gives
\begin{align}
\int_0^t
\ginner{\boldsymbol g_N(s)}{\dot{\boldsymbol u}_h(s)}\,ds
&=
\ginner{\boldsymbol g_N(t)}{\boldsymbol u_h(t)}
-
\ginner{\boldsymbol g_N(0)}{\boldsymbol u_0}
-
\int_0^t
\ginner{\dot{\boldsymbol g}_N(s)}{\boldsymbol u_h(s)}\,ds .
\label{eq:sd:gibp}
\end{align}
Using the inverse trace inequality
\[
\lVert\boldsymbol v\rVert_{L_2(\Gamma_N)}^2
\le
Ch^{-1}\enorm{\boldsymbol v}^2,
\qquad
\boldsymbol v\in\Dgvh,
\]
and Young's inequality, we obtain, for any $\epsilon_b>0$,
\begin{align}
\left|
\ginner{\boldsymbol g_N(t)}{\boldsymbol u_h(t)}
\right|
&\le
\frac{1}{2\epsilon_b}
\gnorm{\boldsymbol g_N(t)}^2
+
\frac{C\epsilon_b}{2h}
\enorm{\boldsymbol u_h(t)}^2,
\label{eq:sd:gtbound}
\\
\left|
\ginner{\boldsymbol g_N(0)}{\boldsymbol u_0}
\right|
&\le
\frac{C}{h}
\gnorm{\boldsymbol g_N(0)}^2
+
C\enorm{\boldsymbol u_0}^2,
\label{eq:sd:g0bound}
\\
\left|
\int_0^t
\ginner{\dot{\boldsymbol g}_N(s)}{\boldsymbol u_h(s)}\,ds
\right|
&\le
\frac{1}{2\epsilon_b}
\lgnorm{\dot{\boldsymbol g}_N}^2
+
\frac{C\epsilon_b}{2h}
\int_0^t
\enorm{\boldsymbol u_h(s)}^2\,ds .
\label{eq:sd:gdotbound}
\end{align}
Combining \eqref{eq:sd:Fsplit}-\eqref{eq:sd:gdotbound}, we obtain
\begin{align}
\int_0^t
F(s;\dot{\boldsymbol u}_h(s))\,ds
\leq&
\frac{1}{2\epsilon_a}
\Llnorm{\boldsymbol f}^2
+
\frac{\epsilon_a}{2}
\int_0^t
\llnorm{\dot{\boldsymbol u}_h(s)}^2\,ds
\nonumber\\
&+
\left(
\frac{1}{2\epsilon_b}
+
\frac{C}{h}
\right)
\ignorm{\boldsymbol g_N}^2
+
\frac{1}{2\epsilon_b}
\lgnorm{\dot{\boldsymbol g}_N}^2
\nonumber\\
&+
C\enorm{\boldsymbol u_0}^2
+
\frac{C\epsilon_b}{2h}
\enorm{\boldsymbol u_h(t)}^2
+
\frac{C\epsilon_b}{2h}
\int_0^t
\enorm{\boldsymbol u_h(s)}^2\,ds .
\label{eq:sd:Fbound}
\end{align}
Substituting \eqref{eq:sd:Fbound} into \eqref{eq:sd:stab:preF} yields
\begin{align}
\frac{\rho}{2}&
\llnorm{\dot{\boldsymbol u}_h(t)}^2
+\left(
\frac{\varphi_0}{2}
-\frac{C\varphi_0}{\sqrt{\alpha_0}}
-\frac{C\epsilon_b}{2h}
\right)
\enorm{\boldsymbol u_h(t)}^2+\sum_{q=1}^{\Nphi}
\left(
\frac{1}{2\varphi_q}
-\frac{C}{\varphi_q\sqrt{\alpha_0}}
\right)
\enorm{\boldsymbol\zeta_{hq}(t)}^2
\nonumber\\
&+\sum_{q=1}^{\Nphi}
\left(
\frac{1}{\tau_q\varphi_q}
-\frac{C}{\tau_q\varphi_q\sqrt{\alpha_0}}
\right)
\int_0^t
\enorm{\boldsymbol\zeta_{hq}(s)}^2\,ds
+\int_0^t
\jump{\dot{\boldsymbol u}_h(s)}{\dot{\boldsymbol u}_h(s)}\,ds
\nonumber\\
\leq&C
\bigg(
\llnorm{\boldsymbol w_0}^2
+\enorm{\boldsymbol u_0}^2
+\frac{1}{\epsilon_a}
\Llnorm{\boldsymbol f}^2
+\left(
\frac{1}{\epsilon_b}
+\frac{1}{h}
\right)
\ignorm{\boldsymbol g_N}^2
+\frac{1}{\epsilon_b}
\lgnorm{\dot{\boldsymbol g}_N}^2
\bigg)
\nonumber\\
&+\frac{\epsilon_a}{2}
\int_0^t
\llnorm{\dot{\boldsymbol u}_h(s)}^2\,ds
+\frac{C\epsilon_b}{2h}
\int_0^t\enorm{\boldsymbol u_h(s)}^2\,ds .
\label{eq:sd:stab:almost}
\end{align}
Taking the supremum over $0\leq t\leq T$ in \eqref{eq:sd:stab:almost}, and using
\[
\int_0^t \llnorm{\dot{\boldsymbol u}_h(s)}^2\,ds
\leq
T\ilnorm{\dot{\boldsymbol u}_h}^2,
\qquad
\int_0^t \enorm{\boldsymbol u_h(s)}^2\,ds
\leq
T\ienorm{\boldsymbol u_h}^2,
\]
we obtain
\begin{align}
\frac{\rho}{2}&
\ilnorm{\dot{\boldsymbol u}_h}^2
+\left(
\frac{\varphi_0}{2}
-\frac{C\varphi_0}{\sqrt{\alpha_0}}
-\frac{C\epsilon_b}{2h}
\right)
\ienorm{\boldsymbol u_h}^2
+\sum_{q=1}^{\Nphi}
\left(
\frac{1}{2\varphi_q}
-\frac{C}{\varphi_q\sqrt{\alpha_0}}
\right)
\ienorm{\boldsymbol\zeta_{hq}}^2
\nonumber\\
&+\sum_{q=1}^{\Nphi}
\left(
\frac{1}{\tau_q\varphi_q}
-\frac{C}{\tau_q\varphi_q\sqrt{\alpha_0}}
\right)
\lenorm{\boldsymbol\zeta_{hq}}^2
+\int_0^T
\jump{\dot{\boldsymbol u}_h(s)}{\dot{\boldsymbol u}_h(s)}\,ds
\nonumber\\
\leq&C\bigg(
\llnorm{\boldsymbol w_0}^2
+\enorm{\boldsymbol u_0}^2
+\frac{1}{\epsilon_a}
\Llnorm{\boldsymbol f}^2
+\left(
\frac{1}{\epsilon_b}
+\frac{1}{h}
\right)
\ignorm{\boldsymbol g_N}^2
+\frac{1}{\epsilon_b}
\lgnorm{\dot{\boldsymbol g}_N}^2
\bigg)
\nonumber\\
&+\frac{\epsilon_a T}{2}
\ilnorm{\dot{\boldsymbol u}_h}^2
+\frac{C\epsilon_b T}{2h}
\ienorm{\boldsymbol u_h}^2 .
\label{eq:sd:stab:sup}
\end{align}
We now choose
\[
\epsilon_a=\frac{\rho}{4T},
\qquad
\epsilon_b=\frac{\varphi_0 h}{4C(T+1)} .
\]
Then the last two terms on the right-hand side of \eqref{eq:sd:stab:sup} can be absorbed into the corresponding terms on the left-hand side. Moreover, by taking $\alpha_0$ sufficiently large, all coefficients remain positive. Therefore, there exists a constant $C>0$, independent of $h$ and the semi-discrete solution, such that
\begin{align*}
&
\ilnorm{\rho^{1/2}\dot{\boldsymbol u}_h}^2
+
\ienorm{\boldsymbol u_h}^2
+
\sum_{q=1}^{\Nphi}
\ienorm{\boldsymbol\zeta_{hq}}^2
+
\sum_{q=1}^{\Nphi}
\lenorm{\boldsymbol\zeta_{hq}}^2
\\&+
\int_0^T
\jump{\dot{\boldsymbol u}_h(s)}{\dot{\boldsymbol u}_h(s)}\,ds
\nonumber\\
\leq&
C
\bigg(
\llnorm{\rho^{1/2}\boldsymbol w_0}^2
+
\enorm{\boldsymbol u_0}^2
+
T\Llnorm{\boldsymbol f}^2
+
h^{-1}(T+1)\ignorm{\boldsymbol g_N}^2
\\&+
h^{-1}(T+1)\lgnorm{\dot{\boldsymbol g}_N}^2
\bigg).
\end{align*}

\end{proof}

The stability estimate immediately yields the well-posedness of the semi-discrete problem. Furthermore, since the analysis avoids the use of Gr\"onwall's inequality, the resulting stability bound grows at most linearly with the final time $T$, rather than exponentially. Consequently, the estimate remains valid over long time intervals and is therefore well suited for long-time simulations.

We next derive \textit{a priori} error estimates for the semi-discrete approximation. Following the standard DG framework, we introduce the DG elliptic projection and decompose the error into projection and discrete components.

The DG elliptic projection $\dgelliptic:\Dgv\to\Dgvh$ is defined by
\[
\dginner{\dgelliptic\boldsymbol u}{\boldsymbol v}
=
\dginner{\boldsymbol u}{\boldsymbol v},
\qquad
\forall \boldsymbol v\in\Dgvh .
\]
For $\boldsymbol u\in\Dgv$ with $s>3/2$, the standard DG approximation estimates give
\begin{align}
\enorm{\boldsymbol u-\dgelliptic\boldsymbol u}
&\leq
C h^{\min(k+1,s)-1}
\vertiii{\boldsymbol u}_{H^s(\mathcal E_h)},
\label{vector:appDG}
\\
\llnorm{\boldsymbol u-\dgelliptic\boldsymbol u}
&\leq
C h^{\min(k+1,s)}
\vertiii{\boldsymbol u}_{H^s(\mathcal E_h)} .
\label{vector:appl2}
\end{align}
Here $C>0$ is independent of $h$ and $\boldsymbol u$, provided the penalty parameters are sufficiently large.  The $L_2$ estimate \eqref{vector:appl2} is understood under the standard elliptic regularity assumption for the associated dual problem; see, e.g., \cite{riviere2003discontinuous,houston2006hp}.

We decompose the errors into projection and discrete components. For the displacement, we define
\[
\boldsymbol\theta(t)
:=
\boldsymbol u(t)-\dgelliptic\boldsymbol u(t),
\qquad
\boldsymbol\chi(t)
:=
\dgelliptic\boldsymbol u(t)-\boldsymbol u_h(t).
\]
For the internal variables, we define, for each $q=1,\ldots,\Nphi$,
\[
\boldsymbol\nu_q(t)
:=
\boldsymbol\zeta_q(t)-\dgelliptic\boldsymbol\zeta_q(t),
\qquad
\boldsymbol\Upsilon_q(t)
:=
\dgelliptic\boldsymbol\zeta_q(t)-\boldsymbol\zeta_{hq}(t).
\]
Then
\[
\boldsymbol u(t)-\boldsymbol u_h(t)
=
\boldsymbol\theta(t)+\boldsymbol\chi(t),
\qquad
\boldsymbol\zeta_q(t)-\boldsymbol\zeta_{hq}(t)
=
\boldsymbol\nu_q(t)+\boldsymbol\Upsilon_q(t).
\]
By the definition of the DG elliptic projection, the projection errors satisfy
\[
\dginner{\boldsymbol\theta(t)}{\boldsymbol v}=0,
\qquad
\dginner{\boldsymbol\nu_q(t)}{\boldsymbol v}=0,
\qquad
\forall \boldsymbol v\in\Dgvh.
\]
Moreover, the projection estimates \eqref{vector:appDG}-\eqref{vector:appl2} imply
\[
\enorm{\boldsymbol\theta(t)}
+
\sum_{q=1}^{\Nphi}\enorm{\boldsymbol\nu_q(t)}
\leq
C h^{\min(k+1,s)-1}
\left(
\vertiii{\boldsymbol u(t)}_{H^s(\mathcal E_h)}
+
\sum_{q=1}^{\Nphi}
\vertiii{\boldsymbol\zeta_q(t)}_{H^s(\mathcal E_h)}
\right),
\]
and
\[
\llnorm{\boldsymbol\theta(t)}
+
\sum_{q=1}^{\Nphi}\llnorm{\boldsymbol\nu_q(t)}
\leq
C h^{\min(k+1,s)}
\left(
\vertiii{\boldsymbol u(t)}_{H^s(\mathcal E_h)}
+
\sum_{q=1}^{\Nphi}
\vertiii{\boldsymbol\zeta_q(t)}_{H^s(\mathcal E_h)}
\right).
\]
Thus, the remaining task is to estimate the discrete errors
$\boldsymbol\chi$ and $\boldsymbol\Upsilon_q$.

Subtracting the semi-discrete formulation from the continuous variational problem and using the projection orthogonality, we obtain, for all $\boldsymbol v\in\Dgvh$,
\begin{gather}
(\rho\ddot{\boldsymbol\chi},\boldsymbol v)
+
\varphi_0\dginner{\boldsymbol\chi}{\boldsymbol v}
+
\sum_{q=1}^{\Nphi}
\dginner{\boldsymbol\Upsilon_q}{\boldsymbol v}
+
\jump{\dot{\boldsymbol\chi}}{\boldsymbol v}
=
-
(\rho\ddot{\boldsymbol\theta},\boldsymbol v)
-
\jump{\dot{\boldsymbol\theta}}{\boldsymbol v},
\label{eq:error:chi}\\
\tau_q\dginner{\dot{\boldsymbol\Upsilon}_q}{\boldsymbol v}
+
\dginner{\boldsymbol\Upsilon_q}{\boldsymbol v}
=
\tau_q\varphi_q\dginner{\dot{\boldsymbol\chi}}{\boldsymbol v}
+
\tau_q\varphi_q\dginner{\dot{\boldsymbol\theta}}{\boldsymbol v}
-
\tau_q\dginner{\dot{\boldsymbol\nu}_q}{\boldsymbol v},
\qquad q=1,\ldots,\Nphi.
\label{eq:error:upsilon}
\end{gather}

Taking $\boldsymbol v=\dot{\boldsymbol\chi}$ in \eqref{eq:error:chi} and
$\boldsymbol v=\boldsymbol\Upsilon_q/(\tau_q\varphi_q)$ in
\eqref{eq:error:upsilon}, summing over $q=1,\ldots,\Nphi$, and using the symmetry
of the SIPG bilinear form, we obtain
\begin{align}
\frac{d}{dt}
\bigg[
\frac{\rho}{2}\llnorm{\dot{\boldsymbol\chi}}^2
+
\frac{\varphi_0}{2}\dginner{\boldsymbol\chi}{\boldsymbol\chi}
+
\frac12\sum_{q=1}^{\Nphi}
\frac{1}{\varphi_q}
\dginner{\boldsymbol\Upsilon_q}{\boldsymbol\Upsilon_q}
\bigg]
+
\sum_{q=1}^{\Nphi}
\frac{1}{\tau_q\varphi_q}
\dginner{\boldsymbol\Upsilon_q}{\boldsymbol\Upsilon_q}
+
\jump{\dot{\boldsymbol\chi}}{\dot{\boldsymbol\chi}}
\nonumber\\
=
-\inner{\rho\ddot{\boldsymbol\theta}}{\dot{\boldsymbol\chi}}
-\jump{\dot{\boldsymbol\theta}}{\dot{\boldsymbol\chi}}
+
\sum_{q=1}^{\Nphi}
\dginner{\boldsymbol\Upsilon_q}{\dot{\boldsymbol\theta}}
-
\sum_{q=1}^{\Nphi}
\frac{1}{\varphi_q}
\dginner{\boldsymbol\Upsilon_q}{\dot{\boldsymbol\nu}_q}.
\label{eq:error:energy}
\end{align}
Using the same argument as in the proof of the stability estimate, together with
Cauchy-Schwarz and Young's inequalities, we obtain
\begin{align}
&
\ilnorm{\rho^{1/2}\dot{\boldsymbol\chi}}^2
+
\ienorm{\boldsymbol\chi}^2
+
\sum_{q=1}^{\Nphi}\ienorm{\boldsymbol\Upsilon_q}^2
+
\sum_{q=1}^{\Nphi}\lenorm{\boldsymbol\Upsilon_q}^2
+
\int_0^T
\jump{\dot{\boldsymbol\chi}(s)}{\dot{\boldsymbol\chi}(s)}\,ds
\nonumber\\
&\leq
C
\bigg(
\Llnorm{\ddot{\boldsymbol\theta}}^2
+
\int_0^T
\jump{\dot{\boldsymbol\theta}(s)}{\dot{\boldsymbol\theta}(s)}\,ds
+
\sum_{q=1}^{\Nphi}
\lenorm{\dot{\boldsymbol\theta}}^2
+
\sum_{q=1}^{\Nphi}
\lenorm{\dot{\boldsymbol\nu}_q}^2
\bigg).
\label{eq:error:discrete:bound}
\end{align}
The projection estimates \eqref{vector:appDG}-\eqref{vector:appl2}, applied also to
time derivatives, imply
\begin{align}
&\Llnorm{\ddot{\boldsymbol\theta}}
+
\lenorm{\dot{\boldsymbol\theta}}
+
\sum_{q=1}^{\Nphi}
\lenorm{\dot{\boldsymbol\nu}_q}\nonumber\\
&\leq
C h^{\min(k+1,s)-1}
\bigg(
\lVert \ddot{\boldsymbol u}\rVert_{L_2(0,T;\boldsymbol H^s(\mathcal E_h))}
+
\lVert \dot{\boldsymbol u}\rVert_{L_2(0,T;\boldsymbol H^s(\mathcal E_h))}
+
\sum_{q=1}^{\Nphi}
\lVert \dot{\boldsymbol\zeta}_q\rVert_{L_2(0,T;\boldsymbol H^s(\mathcal E_h))}
\bigg).
\label{eq:projection:time:bound}
\end{align}
Moreover,
\[
\int_0^T
\jump{\dot{\boldsymbol\theta}(s)}{\dot{\boldsymbol\theta}(s)}\,ds
\leq
C h^{2(\min(k+1,s)-1)}
\lVert \dot{\boldsymbol u}\rVert_{L_2(0,T;\boldsymbol H^s(\mathcal E_h))}^2 .
\]
Combining these estimates with the triangle inequality gives the following result.

\begin{theorem}\label{thm:semi:error}
Assume that $\alpha_0>0$ is sufficiently large and $\beta_0(d-1)\ge1$. Let
$\boldsymbol u$ and $\boldsymbol\zeta_q$, $q=1,\ldots,\Nphi$, be sufficiently smooth, and let
$\boldsymbol u_h$ and $\boldsymbol\zeta_{hq}$ be the semi-discrete DG approximations.
Then there exists a constant $C>0$, independent of $h$, such that
\begin{align}
&
\ilnorm{\rho^{1/2}(\dot{\boldsymbol u}-\dot{\boldsymbol u}_h)}
+
\ienorm{\boldsymbol u-\boldsymbol u_h}
+
\sum_{q=1}^{\Nphi}
\ienorm{\boldsymbol\zeta_q-\boldsymbol\zeta_{hq}}
+
\sum_{q=1}^{\Nphi}
\lenorm{\boldsymbol\zeta_q-\boldsymbol\zeta_{hq}}
\nonumber\\
&\leq
C h^{\min(k+1,s)-1}
\bigg(
\lVert \boldsymbol u\rVert_{W^1_\infty(0,T;\boldsymbol H^s(\mathcal E_h))}
+
\lVert \ddot{\boldsymbol u}\rVert_{L_2(0,T;\boldsymbol H^s(\mathcal E_h))}
+
\sum_{q=1}^{\Nphi}
\lVert \boldsymbol\zeta_q\rVert_{W^1_\infty(0,T;\boldsymbol H^s(\mathcal E_h))}
\bigg).
\label{eq:apriori:final}
\end{align}
\end{theorem}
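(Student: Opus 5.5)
The plan is the standard error splitting already set up before the statement:
\[
\boldsymbol u-\boldsymbol u_h=\boldsymbol\theta+\boldsymbol\chi,
\qquad
\boldsymbol\zeta_q-\boldsymbol\zeta_{hq}=\boldsymbol\nu_q+\boldsymbol\Upsilon_q .
\]
We bound the projection parts by \eqref{vector:appDG}--\eqref{vector:appl2} and the discrete parts by an energy argument on \eqref{eq:error:chi}--\eqref{eq:error:upsilon}. The two are then combined with the triangle inequality.

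For the projection parts, \eqref{vector:appDG} applied pointwise in time controls $\ienorm{\boldsymbol\theta}$, $\ienorm{\boldsymbol\nu_q}$ and $\lenorm{\boldsymbol\nu_q}$ by $Ch^{\min(k+1,s)-1}$ times $\lVert\boldsymbol u\rVert_{L_\infty(0,T;\boldsymbol H^s(\mathcal E_h))}$ and $\lVert\boldsymbol\zeta_q\rVert_{L_\infty(0,T;\boldsymbol H^s(\mathcal E_h))}$. Here we use $\lenorm{\cdot}\le T^{1/2}\ienorm{\cdot}$, with $T$ absorbed into $C$. Since $\dgelliptic$ is linear and time-independent, it commutes with $\partial_t$. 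Hence $\dot{\boldsymbol\theta}=\dot{\boldsymbol u}-\dgelliptic\dot{\boldsymbol u}$, and \eqref{vector:appl2} gives $\ilnorm{\dot{\boldsymbol\theta}}\le Ch^{\min(k+1,s)}\lVert\boldsymbol u\rVert_{W^1_\infty(0,T;\boldsymbol H^s(\mathcal E_h))}$. This is even better than needed.

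For the discrete parts, integrate the energy identity \eqref{eq:error:energy} over $(0,t)$. We take $\boldsymbol\chi(0)=\dot{\boldsymbol\chi}(0)=\boldsymbol 0$ and $\boldsymbol\Upsilon_q(0)=\boldsymbol 0$. This requires the discrete initial data to be chosen compatibly, e.g.\ $\boldsymbol u_h(0)=\dgelliptic\boldsymbol u_0$, which \eqref{eq:semi:ic} essentially enforces for the displacement. Then convert $\dginner{\cdot}{\cdot}$ to $\enorm{\cdot}^2$ via \eqref{eq:relation:dgnorm} and \eqref{eq:vector:dg:bdd2}, exactly as in the stability proof; large $\alpha_0$ keeps the coefficients positive. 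The right-hand side terms are handled as follows:
\begin{itemize}
\item $(\rho\ddot{\boldsymbol\theta},\dot{\boldsymbol\chi})$: use Cauchy--Schwarz and Young with weight $\epsilon/T$, giving $\Llnorm{\ddot{\boldsymbol\theta}}^2$ plus a multiple of $\ilnorm{\dot{\boldsymbol\chi}}^2$.
\item $\jump{\dot{\boldsymbol\theta}}{\dot{\boldsymbol\chi}}$: apply Cauchy--Schwarz in the jump seminorm and absorb $\tfrac12\int_0^t\jump{\dot{\boldsymbol\chi}}{\dot{\boldsymbol\chi}}$.
\item $\dginner{\boldsymbol\Upsilon_q}{\dot{\boldsymbol\theta}}$ and $\dginner{\boldsymbol\Upsilon_q}{\dot{\boldsymbol\nu}_q}$: use continuity of $a$ (extended to $\Dgv$ in the broken norm with trace terms, standard for SIPG). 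Young's inequality then absorbs $\int_0^t\enorm{\boldsymbol\Upsilon_q}^2$ into the dissipative term $\frac{1}{\tau_q\varphi_q}\int_0^t\dginner{\boldsymbol\Upsilon_q}{\boldsymbol\Upsilon_q}$.
\end{itemize}
Taking the supremum in $t$ and absorbing as in the stability theorem yields \eqref{eq:error:discrete:bound} with no Gr\"onwall argument.

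Next, \eqref{eq:projection:time:bound} bounds the data terms by $h^{2(\min(k+1,s)-1)}$ times $\lVert\ddot{\boldsymbol u}\rVert^2_{L_2(\boldsymbol H^s)}$, $\lVert\dot{\boldsymbol u}\rVert^2_{L_2(\boldsymbol H^s)}$ and $\lVert\dot{\boldsymbol\zeta}_q\rVert^2_{L_2(\boldsymbol H^s)}$. The $L_2(0,T)$ norms of first derivatives are bounded by $T^{1/2}$ times the $W^1_\infty$ norms. Taking square roots and adding the projection bounds gives \eqref{eq:apriori:final}.

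The main obstacle is the terms where $a(\cdot,\cdot)$ acts on the non-discrete functions $\dot{\boldsymbol\theta}$ and $\dot{\boldsymbol\nu}_q$. The Galerkin orthogonality of $\dgelliptic$ makes $\dginner{\dot{\boldsymbol\theta}}{\boldsymbol v}$ vanish for $\boldsymbol v\in\Dgvh$. This suggests the right-hand side of \eqref{eq:error:upsilon} simplifies, but only if the test function is discrete, which $\boldsymbol\Upsilon_q$ is. I would first exploit this: orthogonality kills $\dginner{\boldsymbol\Upsilon_q}{\dot{\boldsymbol\theta}}$ and $\dginner{\boldsymbol\Upsilon_q}{\dot{\boldsymbol\nu}_q}$ outright, by symmetry of $a$. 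This leaves only the inertial and jump residuals. If that is unavailable because of how the internal-variable equation is tested, I would fall back on continuity of $a$ in the augmented norm $\enorm{\cdot}^2+\sum_e|e|^{\beta_0}\lVert\{\ushort{\boldsymbol D}\ushort{\boldsymbol\varepsilon}\}\rVert^2_{L_2(e)}$. That norm still satisfies the same $h^{\min(k+1,s)-1}$ approximation rate.
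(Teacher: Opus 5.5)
Your proposal is correct and follows essentially the paper's route: you split the error with $\dgelliptic$, run the Gr\"onwall-free energy argument on \eqref{eq:error:energy} with absorption after taking the supremum in time, and then combine \eqref{eq:projection:time:bound} with the triangle inequality. Your observation that $\dginner{\boldsymbol\Upsilon_q}{\dot{\boldsymbol\theta}}$ and $\dginner{\boldsymbol\Upsilon_q}{\dot{\boldsymbol\nu}_q}$ vanish by symmetry of $a$ and Galerkin orthogonality is right; the paper instead keeps these terms and bounds them by continuity, which is where its extra $\lenorm{\dot{\boldsymbol\theta}}$ and $\lenorm{\dot{\boldsymbol\nu}_q}$ terms come from. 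Also, with the initialization \eqref{eq:semi:ic} you actually have $\dot{\boldsymbol\chi}(0)\neq\boldsymbol 0$, because the discrete initial velocity is the $L_2$ projection of $\boldsymbol w_0$ rather than $\dgelliptic\boldsymbol w_0$; this only adds a harmless initial term of order $h^{\min(k+1,s)}$, which the paper also ignores.
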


\begin{rmk}
Theorem~\ref{thm:semi:error} gives the optimal convergence rate in the DG energy norm. Under the standard elliptic regularity assumption for the associated dual problem, an optimal $L_2$-error estimate for the displacement can also be obtained by a classical Aubin-Nitsche argument.
\end{rmk}

\subsection{Time discretization}

We now discretize the semi-discrete system \eqref{eq:firstorder:compact} in time. Let
\[
0=t_0<t_1<\cdots<t_N=T,
\qquad
\Delta t=t_{n+1}-t_n,
\]
be a uniform partition of $[0,T]$, and let $\boldsymbol y^n$ denote an approximation to
$\boldsymbol y(t_n)$.

In the numerical experiments, we consider the backward Euler and Crank-Nicolson schemes. The backward Euler method applied to \eqref{eq:firstorder:compact} reads
\begin{equation}
\left(I-\Delta t\,\mathcal A\right)\boldsymbol y^{n+1}
=
\boldsymbol y^n
+
\Delta t\,\boldsymbol g(t_{n+1}).
\label{eq:BE:matrix}
\end{equation}
The Crank-Nicolson method is given by
\begin{equation}
\left(I-\frac{\Delta t}{2}\mathcal A\right)\boldsymbol y^{n+1}
=
\left(I+\frac{\Delta t}{2}\mathcal A\right)\boldsymbol y^n
+
\frac{\Delta t}{2}
\left(
\boldsymbol g(t_{n+1})
+
\boldsymbol g(t_n)
\right).
\label{eq:CN:matrix}
\end{equation}

For sufficiently smooth solutions, the backward Euler method is first-order accurate in time, whereas the Crank-Nicolson method is second-order accurate. Hence, the temporal discrete error is expected to behave as
\[
O\left(\Delta t^r\right),
\qquad
r=
\begin{cases}
1, & \text{backward Euler},\\
2, & \text{Crank-Nicolson},
\end{cases}
\]
under the regularity assumptions
$\boldsymbol y\in C^2([0,T])$ for backward Euler and
$\boldsymbol y\in C^3([0,T])$ for Crank-Nicolson, respectively.
Consequently, combining the temporal and spatial discretization errors yields the estimate
\[
\|\boldsymbol y(t_n)-\boldsymbol y_h^n\|
=
O\left(
h^{\min(k+1,s)}
+
\Delta t^r
\right),\qq\text{and}\qq\|\boldsymbol y(t_n)-\boldsymbol y_h^n\|_{\mathcal A}
=
O\left(
h^{\min(k+1,s)-1}
+
\Delta t^r
\right).
\]

\section{Proper orthogonal decomposition}\label{sec:pod}
In this section, we introduce the proposed proper orthogonal decomposition (POD) framework for reducing the computational complexity of the DG approximation. We first review the POD formulation and then derive the corresponding DG-POD reduced-order model.
\subsection{Reduced-order POD system}
\label{ssec:rom}

We now construct a reduced-order model (ROM) for the semi-discrete system \eqref{eq:firstorder:compact}, e.g. \cite{Si1,kunisch,SS2}. Let
\[
0\leq T_0<T_1<\cdots<T_{N_s}=T
\]
be a collection of snapshot times and define the snapshot matrix
\[
Y=
\begin{bmatrix}
\boldsymbol y(T_0) &
\boldsymbol y(T_1) &
\cdots &
\boldsymbol y(T_{N_s})
\end{bmatrix}
\in\mathbb R^{N\times(N_s+1)},
\]
where $N=(\Nphi+2)\ndof$ denotes the dimension of the full-order system.

For a prescribed reduced dimension $\ell$, the POD basis is defined as the solution of
\begin{equation}
\min_{\{\boldsymbol\psi_j\}_{j=1}^{\ell}}
\sum_{n=0}^{N_s}
\left\|
\boldsymbol y(T_n)
-
\sum_{j=1}^{\ell}
\inner{\boldsymbol y(T_n)}{\boldsymbol\psi_j}
\boldsymbol\psi_j
\right\|^2,\qq\text{subject to $\inner{\boldsymbol\psi_i}{\boldsymbol\psi_j}
=
\delta_{ij},
\qquad
i,j=1,\ldots,\ell$.}
\label{eq:pod:min}
\end{equation}

Let
\[
Y=U\Sigma V^\top,
\qquad
\Sigma=\operatorname{diag}(\sigma_1,\ldots,\sigma_m),
\qquad
\sigma_1\ge\cdots\ge\sigma_m>0,
\]
be the singular value decomposition of $Y$, where
$m=\operatorname{rank}(Y)$.
The optimal POD basis is then given by the first $\ell$ left singular vectors,
\[
\Phi_\ell=
\begin{bmatrix}
\boldsymbol\psi_1 & \cdots & \boldsymbol\psi_\ell
\end{bmatrix}
=
U^\ell.
\]
Equivalently, the POD modes may be obtained from the correlation matrix
\[
K=Y^\top Y=V\Sigma^2V^\top.
\]
Since $K\in\mathbb R^{(N_s+1)\times(N_s+1)}$, solving the associated eigenvalue problem is considerably cheaper than diagonalizing $YY^\top\in\mathbb R^{N\times N}$ whenever $N_s\ll N$. Let
$V_\ell=[v_1,\ldots,v_\ell]$
denote the eigenvectors corresponding to the $\ell$ largest eigenvalues
$\sigma_1^2,\ldots,\sigma_\ell^2$.
The POD basis matrix is then constructed as
\[
\Phi_\ell
=
YV_\ell\Sigma_\ell^{-1},
\qq\text{where}\qq
\Sigma_\ell
=
\operatorname{diag}(\sigma_1,\ldots,\sigma_\ell).
\]
By construction, the columns of $\Phi_\ell$ are orthonormal and span the dominant snapshot subspace.

The POD basis minimizes the mean-square projection error among all $\ell$-dimensional subspaces and satisfies
\begin{equation}
\sum_{n=0}^{N_s}
\left\|
\boldsymbol y(t_n)
-
\Phi_\ell\Phi_\ell^\top\boldsymbol y(t_n)
\right\|^2
=
\sum_{j=\ell+1}^{m}
\sigma_j^2.
\label{eq:pod:error}
\end{equation}

The reduced approximation is sought in the form
\[
\boldsymbol y_\ell(t)=\Phi_\ell\boldsymbol a(t),
\qquad
\boldsymbol a(t)\in\mathbb R^\ell.
\]
Substituting this expansion into \eqref{eq:firstorder:compact} and applying a Galerkin projection onto
$\operatorname{span}\{\boldsymbol\psi_1,\ldots,\boldsymbol\psi_\ell\}$ yields
\[
\dot{\boldsymbol a}(t)
=
\mathcal A_\ell\boldsymbol a(t)
+
\boldsymbol g_\ell(t),
\qquad
\mathcal A_\ell=\Phi_\ell^\top\mathcal A\Phi_\ell,
\qquad
\boldsymbol g_\ell(t)=\Phi_\ell^\top\boldsymbol g(t).
\]

Alternatively, one may construct the POD basis with respect to a weighted inner product. Let
$W\in\mathbb R^{N\times N}$ be symmetric positive definite, where
$N=(\Nphi+2)\ndof$, and define
\[
\inner{\boldsymbol x}{\boldsymbol z}_W
=
\boldsymbol x^\top W\boldsymbol z,
\qquad
\lVert\boldsymbol x\rVert_W^2
=
\boldsymbol x^\top W\boldsymbol x .
\]
The standard inner product corresponds to the special case $W=I$. For the present first-order system, a natural energy-based choice is the block-diagonal weight
\[
W_E
=
\operatorname{diag}
\left(
\varphi_0A,\,
\rho M,\,
\frac{1}{\varphi_1}A,\ldots,
\frac{1}{\varphi_{\Nphi}}A
\right),
\]
which represents the discrete counterpart of the semi-discrete energy.

The corresponding weighted POD basis is obtained from the SVD of
$\widetilde Y=W^{1/2}Y$,
or equivalently from the weighted correlation matrix
$K_W=Y^\top W Y$.
The resulting basis satisfies $\Phi_\ell^\top W\Phi_\ell=I_\ell$, and the $W$-orthogonal projection is $P_W=\Phi_\ell\Phi_\ell^\top W$.
Consequently, we have
\[
\sum_{n=0}^{N_s}
\lVert
\boldsymbol y(T_n)-P_W\boldsymbol y(T_n)
\rVert_W^2
=
\sum_{j=\ell+1}^{m}\sigma_j^2 .
\]

\subsection{Construction of the DG-POD reduced model}\label{ssec:construction}

We now summarize the implementation procedure for the DG-POD reduced-order model. Although the first-order formulation \eqref{eq:firstorder:compact} is written in terms of the full state vector, it is not necessary to construct POD modes for all state components. In the numerical implementation, we build the POD space from displacement snapshots and use the same reduced basis for the velocity and internal variables.

Let $Y_u=
\begin{bmatrix}
\boldsymbol{\mathfrak u}(T_0)&
\boldsymbol{\mathfrak u}(T_1)&
\cdots&
\boldsymbol{\mathfrak u}(T_{N_s})
\end{bmatrix}
\in\mathbb R^{\ndof\times(N_s+1)}
$
be the displacement snapshot matrix.  Given a symmetric positive definite weighting matrix
$W_u\in\mathbb R^{\ndof\times\ndof}$, the POD basis $\Phi_u\in\mathbb R^{\ndof\times\ell}$ is
computed from the weighted snapshot matrix
\[
\widetilde Y_u=W_u^{1/2}Y_u.
\]
The standard choice corresponds to $W_u=I$, while mass- or energy-weighted POD corresponds,
respectively, to choices such as $W_u=M$ or $W_u=A$.
The resulting basis satisfies
\[
\Phi_u^\top W_u\Phi_u=I_\ell.
\]

Once the POD basis has been constructed from the displacement snapshots, the fully discrete reduced solution can be computed on a possibly finer time grid. Let $\Delta t>0$ be the time-step size for the reduced simulation.
As a natural choice, we take
\[
\Delta t\ll \Delta T,
\qquad
\Delta T:=\max_{1\le i\le N_s}(T_i-T_{i-1}),
\]
where $\{T_i\}_{i=0}^{N_s}$ denotes the snapshot times used to construct the POD basis.

For the fully discrete reduced system, it is not necessary to solve simultaneously for all reduced variables. 
Once $\boldsymbol u_\ell^{n+1}$ is computed, the reduced velocity and internal variables can be updated algebraically. 
We use a one-step implicit scheme with parameter
\[
\omega=
\begin{cases}
1, & \text{backward Euler},\\
\frac12, & \text{Crank-Nicolson}.
\end{cases}
\]
Using the POD basis matrix, the reduced mass, stiffness, jump, and effective stiffness matrices are defined by
\[
M_\ell=\Phi_u^\top M\Phi_u,\qquad
A_\ell=\Phi_u^\top A\Phi_u,\qquad\text{and}\qq
J_\ell=\Phi_u^\top J\Phi_u.
\]
Similarly, the reduced load vector is given by
\[
\boldsymbol b_\ell^n
=
\Phi_u^\top\boldsymbol b^n.
\]
Hence, the reduced fully discrete equations are written as
\begin{gather}
\frac{\boldsymbol u_\ell^{n+1}-\boldsymbol u_\ell^n}{\Delta t}
=
\omega\boldsymbol w_\ell^{n+1}+(1-\omega)\boldsymbol w_\ell^{n},
\label{eq:rom:update:u:omega}
\\
M_\ell
\frac{\boldsymbol w_\ell^{n+1}-\boldsymbol w_\ell^n}{\Delta t}
+J_\ell
\frac{\boldsymbol w_\ell^{n+1}+\boldsymbol w_\ell^n}{2}
+\varphi_0A_\ell\left(\omega\boldsymbol u_\ell^{n+1}+(1-\omega)\boldsymbol u_\ell^{n}\right)\notag\\
+\sum_{q=1}^{\Nphi}
A_\ell\left(\omega\boldsymbol z_{\ell q}^{n+1}+(1-\omega)\boldsymbol z_{\ell q}^{n}\right)
=
\omega\boldsymbol b_\ell^{n+1}+(1-\omega)\boldsymbol b_\ell^{n},
\label{eq:rom:update:v:omega}
\\
\frac{\boldsymbol z_{\ell q}^{n+1}-\boldsymbol z_{\ell q}^{n}}{\Delta t}
+
\frac{1}{\tau_q}\left(\omega\boldsymbol z_{\ell q}^{n+1}+(1-\omega)\boldsymbol z_{\ell q}^{n}\right)
=
\varphi_q\left(\omega\boldsymbol w_\ell^{n+1}+(1-\omega)\boldsymbol w_\ell^{n}\right),
\qquad q=1,\ldots,\Nphi .
\label{eq:rom:update:z:omega}
\end{gather}
Here $\omega=1$ gives the backward Euler method, while $\omega=1/2$ gives the Crank-Nicolson method. The initial reduced coefficients are obtained by projecting the initial data onto the POD space:
\[
\boldsymbol u_\ell^0=\Phi_u^\dagger\boldsymbol{\mathfrak u}_0,
\qquad
\boldsymbol w_\ell^0=\Phi_u^\dagger\boldsymbol{\mathfrak w}_0,
\qquad
\boldsymbol z_{\ell q}^0=\boldsymbol 0,
\qquad q=1,\ldots,\Nphi,
\]
where $\Phi_u^\dagger=\Phi_u^\top W_u$ if the POD basis is $W_u$-orthonormal.

For each time step, the reduced displacement coefficient
$\boldsymbol u_\ell^{n+1}$ is first computed from
\eqref{eq:rom:update:u:omega}-\eqref{eq:rom:update:z:omega}. Once
$\boldsymbol u_\ell^{n+1}$ is available, the reduced velocity
$\boldsymbol w_\ell^{n+1}$ is obtained from
\eqref{eq:rom:update:u:omega}, and the internal variables
$\boldsymbol z_{\ell q}^{n+1}$ are subsequently updated from
\eqref{eq:rom:update:z:omega}. Therefore, the reduced simulation only
requires the solution of a linear system of dimension $\ell$ at each
time step, followed by inexpensive algebraic updates for the remaining
variables.

The approximate full-order coefficient vectors are finally recovered by
$\boldsymbol{\mathfrak u}_h^n\approx\Phi_u\boldsymbol u_\ell^n$.
Eventually, the DG finite element solution can be reconstructed from the approximate full-order coefficient vectors. The reconstructed DG-POD solution at time $t_n$ is the DG finite element function
\[
\boldsymbol u_{h,\ell}^n(\bs x)
=
\sum_{i=1}^{\ndof}
\mathfrak u_i^n \boldsymbol\phi_i(\bs x),
\qquad
\boldsymbol{\mathfrak u}^n
=
\Phi_u\boldsymbol u_\ell^n\in\mathbb R^{\ndof},
\]
where $\boldsymbol u_\ell^n\in\mathbb R^\ell$ is the reduced displacement coefficient vector and $\Phi_u\in\mathbb R^{\ndof\times \ell}$ is the displacement POD basis. The complete DG-POD procedure is summarized in Algorithm~\ref{alg:dgpod}.

Finally, the accuracy of the proposed DG-POD framework is determined by three sources of error: the spatial discretization error, the temporal discretization error, and the POD truncation error. 
The a priori analysis established above shows that the DG approximation converges with order $O(h^{\min(k+1,s)-1})$ in the energy norm, while the time discretization contributes an error of order $O(\Delta t^r)$, where $r=1$ for the backward Euler method and $r=2$ for the Crank-Nicolson method. 
Moreover, the POD approximation error is characterized by the neglected singular values.

To make this decomposition explicit, let $\boldsymbol u(t_n)$ denote the exact displacement, $\boldsymbol u_h(t_n)$ the semi-discrete DG approximation, and $\boldsymbol u_h^n$ the fully discrete full-order approximation. Then,
\[
\|\boldsymbol u(t_n)-\boldsymbol u_{h,\ell}^n\|_V
\le \|\boldsymbol u(t_n)-\boldsymbol u_h(t_n)\|_V
+\|\boldsymbol u_h(t_n)-\boldsymbol u_h^n\|_V
+\|\boldsymbol u_h^n-\boldsymbol u_{h,\ell}^n\|_V.
\]
Assume that the reduced evolution is uniformly stable and that the reduced-order error is controlled by the POD projection error. By the POD optimality property,
\[
\|\boldsymbol u_h^n-\boldsymbol u_{h,\ell}^n\|_V
\lesssim \left(\sum_{j=\ell+1}^{m}\sigma_j^2\right)^{1/2}.
\]
Consequently,
\begin{equation}
\|\boldsymbol u(t_n)-\boldsymbol u_{h,\ell}^n\|_V
\lesssim h^{\min(k+1,s)-1}+\Delta t^r
+\left(\sum_{j=\ell+1}^{m}\sigma_j^2\right)^{1/2}.
\label{eq:pod:error:energy}
\end{equation}
Thus, the total error consists of the spatial discretization error, the temporal discretization error, and the POD truncation error.

\begin{algorithm}[H]
\caption{DG-POD reduced-order simulation}
\label{alg:dgpod}
\begin{algorithmic}[1]
\Require Displacement snapshots $Y_u$, reduced dimension $\ell$, POD weight $W_u$, time step $\Delta t$, and final time $T$.
\Ensure Approximate full-order coefficient vectors $\boldsymbol{\mathfrak u}^n$, $\boldsymbol{\mathfrak w}^n$, and $\boldsymbol z_q^n$.

\State Form the weighted correlation matrix $K_{W_u}= Y_u^\top W_u Y_u$.

\State Compute the spectral decomposition $K_{W_u}=V\Sigma^2V^\top$.

\State Construct the POD basis $\Phi_u=Y_uV_\ell\Sigma_\ell^{-1}$, where $V_\ell$ contains the first $\ell$ eigenvectors and $\Sigma_\ell=\operatorname{diag}(\sigma_1,\ldots,\sigma_\ell)$.

\State Assemble the reduced matrices.

\State Project the initial data to obtain
$\boldsymbol u_\ell^0$,
$\boldsymbol w_\ell^0$,
and
$\boldsymbol z_{\ell q}^0$.

\For{$n=0,\ldots,N-1$}
    \State Solve for $\boldsymbol u_\ell^{n+1}$.
    \State Update $\boldsymbol w_\ell^{n+1}$ and
    $\boldsymbol z_{\ell q}^{n+1}$.
\EndFor

\State Reconstruct
$\boldsymbol{\mathfrak u}^n$,
$\boldsymbol{\mathfrak w}^n$,
and
$\boldsymbol z_q^n$.

\State Recover the finite element solution.
\end{algorithmic}
\end{algorithm}

The computational complexity of the time-marching procedure is governed by the reduced dimension $\ell$ rather than the full-order dimension $\ndof$. This reduction is particularly important for long-time simulations. The stability estimate established above shows that the semi-discrete solution remains controlled over long time intervals, with a bound that grows at most linearly in the final time rather than exponentially. However, even when such long-time integration is numerically stable, the full-order DG system may still be prohibitively expensive because a large-scale linear system must be solved at every time step. The proposed DG-POD reduction directly addresses this computational bottleneck: each time step requires solving only a reduced system of dimension $\ell$, followed by inexpensive updates of the velocity and internal variables. Therefore, when $\ell\ll\ndof$, the method reduces the wall-clock time while preserving the stable long-time behavior of the underlying DG approximation.

\section{Numerical experiments}\label{sec:numeric}
All numerical experiments are implemented in Python using FEniCSx for the finite element discretization and assembly of the DG matrices. The POD bases are computed using NumPy/SciPy dense linear algebra routines applied to the snapshot correlation matrices. In particular, the displacement snapshots are collected in the matrix $Y_u$, and the POD basis is constructed from the dominant eigenvectors of the weighted correlation matrix $K_{W_u}=Y_u^T W_uY_u$.

The full-order DG solution is advanced in time using either the backward Euler method or the Crank-Nicolson method. At each time step, the displacement coefficient vector is first computed by solving the eliminated linear system for $\boldsymbol u_h^{n+1}$. The velocity and internal variables are then updated algebraically. The DG-POD reduced-order model uses the same time-stepping formula after Galerkin projection onto the POD space. Therefore, the full-order and reduced-order simulations differ only in the dimension of the algebraic system to be solved at each time step. For simplicity, we consider uniformly distributed snapshot times, so that
\[
T_i-T_{i-1}=\Delta T=\frac{T}{N_s},
\qquad i=1,\ldots,N_s.
\]

To assess accuracy and convergence, we compute the maximum-in-time errors in the $L_2$ and broken $H^1$ norms. Spatial and temporal convergence rates are then estimated from successive refinements of $h$ and $\Delta t$, respectively.
The computational cost is divided into:
\begin{itemize}
\item \textit{Set-up time}: DG space construction, matrix assembly, and system factorization; for DG-POD, this also includes POD basis construction and reduced-system assembly and factorization.
\item \textit{Solve time}: cumulative linear-solver time during time integration; for DG-POD, this also includes reconstruction in the full DG space.
\item \textit{Total time}: the time including snapshot-generation time.
\end{itemize}
Speed-up factors are computed relative to the corresponding DG full-order simulation.

\subsection{Smooth manufactured solution}\label{ssec:numeric:smooth}
We first examine the performance of the proposed DG-POD reduced-order model for a smooth manufactured problem. The computational domain is
$\Omega=[0,1]^2$, discretized by a uniform triangular mesh obtained by subdividing each square cell of an $N_{xy}\times N_{xy}$ Cartesian partition into two triangles, and the final time is $T=10$. Let the exact displacement be
\[
\boldsymbol u(x,y,t)
=
\begin{pmatrix}
xye^{1-t}
\\
\cos(t)\sin(xy)
\end{pmatrix}.
\]
The viscoelastic parameters are set to
\[
\varphi_0=0.5,\qq
\varphi_1=0.1,\qq
\varphi_2=0.4,\qq
\tau_1=0.5,\qq
\tau_2=1.5.
\]
We take the fourth-order tensor $\ushort{\boldsymbol{D}}$ to be the identity tensor, so that $\ushort{\boldsymbol{D}}\ushort{\boldsymbol{\varepsilon}}=\ushort{\boldsymbol{\varepsilon}}$, and the Dirichlet boundary as
\[
\Gamma_D=\{(x,y)\in\partial\Omega\ |\ x=0\textrm{ or }y=0\}.
\] 
The remaining initial and boundary data, as well as the body force, are readily computed from the exact solution.

We employ DG finite elements of polynomial degree $k$. Throughout the numerical experiments, the spatial resolution is controlled by the parameter $N_{xy}$, while the corresponding $\ndof$ varies according to the mesh size and polynomial order. Since the dimension of the scalar polynomial space $P_k(T)$ on a triangle is $\dim P_k(T)={(k+1)(k+2)}/{2}$,
the total number of degrees of freedom is given by
$\ndof=2N_{xy}^2(k+1)(k+2)$.

\paragraph{Spatial convergence}
We verify the spatial convergence of the proposed DG-POD approximation and compare it with the corresponding full-order DG solution. 
For this test, the time step is chosen sufficiently small so that the temporal discretization error is negligible compared with the spatial discretization error; in particular, we take $N_t=20000$ and $\omega=1/2$. 
The POD basis is constructed from displacement snapshots with $N_s=1000$ and $\ell=5$, and the same reduced dimension is used for all mesh levels.

\Cref{fig:ex1:space:k1,fig:ex1:space:k2} present the maximum-in-time spatial errors in the $L_2$ and broken $H^1$ norms for $k=1$ and $k=2$, respectively. The observed convergence rates are consistent with the optimal DG estimates, namely $O(h^{k+1})$ in the $L_2$ norm and $O(h^k)$ in the broken $H^1$ norm. For $k=2$, a very slight deterioration of the $L_2$ convergence rate is observed on the finest mesh only for the DG-POD approximation with $W_u=M$. This minor deviation indicates that, at $N_{xy}=32$, the spatial discretization error has become comparable to the remaining temporal, POD truncation, and algebraic errors. Apart from this negligible effect, the DG-POD approximations reproduce essentially the same convergence behavior as the full-order DG solutions for all choices of $W_u$, confirming that the POD truncation error remains sufficiently small in this experiment.

\begin{figure}
\centering
\begin{subfigure}{0.48\textwidth}
    \centering
    \includegraphics[width=.9\linewidth]{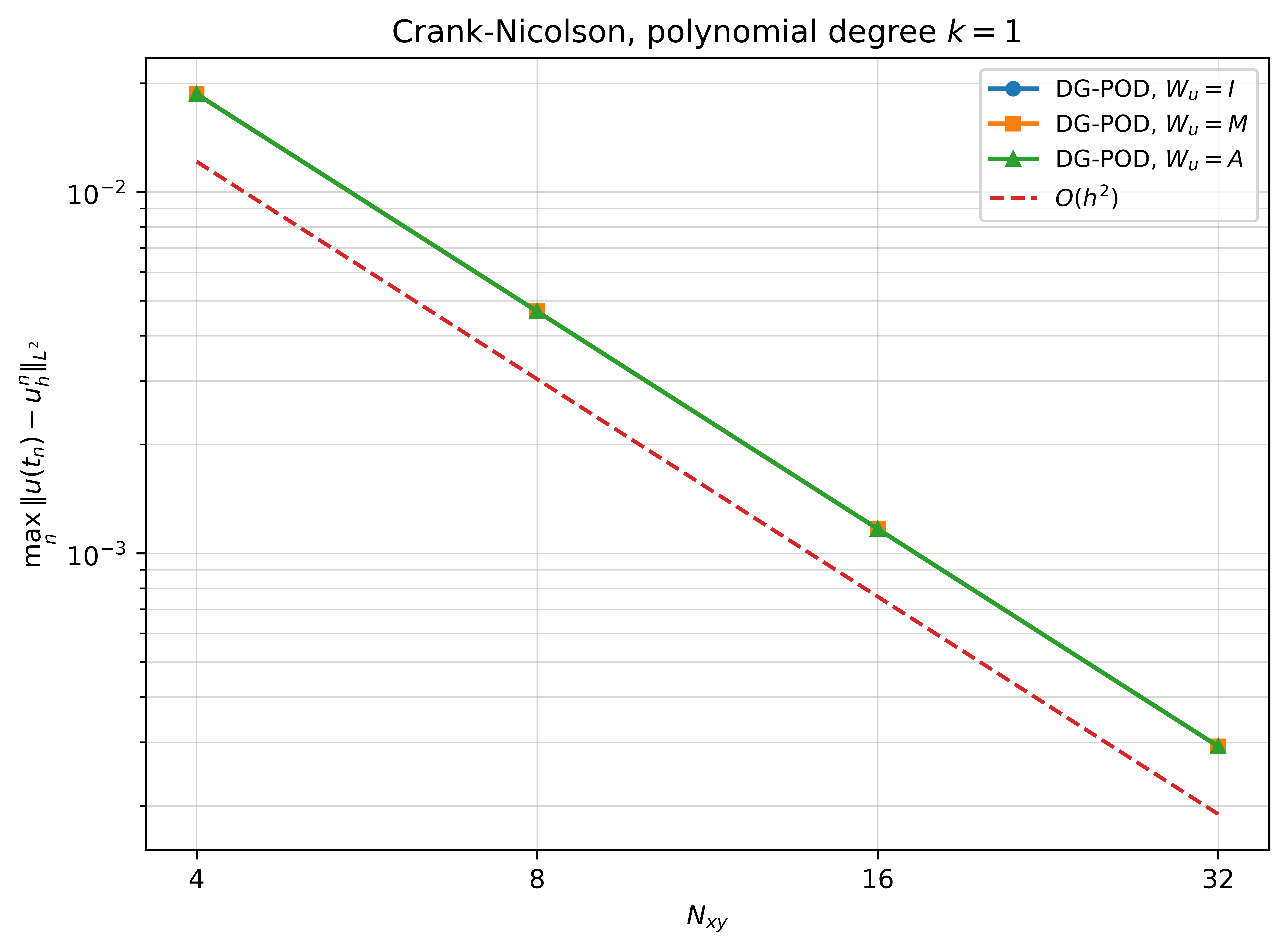}
    \caption{$L_2$ error.}
\end{subfigure}
\hfill
\begin{subfigure}{0.48\textwidth}
    \centering
    \includegraphics[width=.9\linewidth]{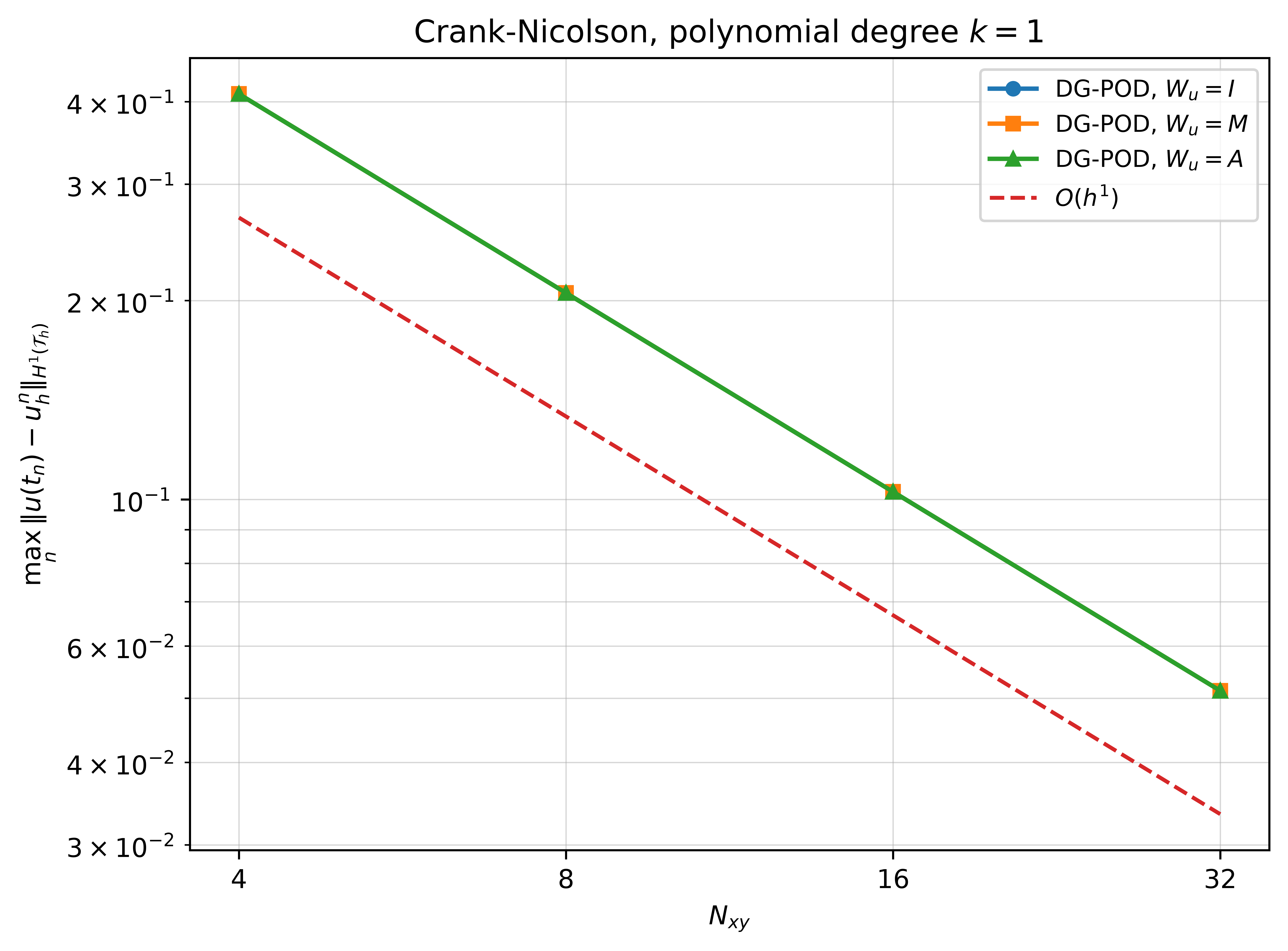}
    \caption{$H^1$ error.}
\end{subfigure}

\caption{Spatial convergence of the DG-POD approximation with the linear basis.}
\label{fig:ex1:space:k1}
\end{figure}
\begin{figure}
\centering
\begin{subfigure}{0.48\textwidth}
    \centering
    \includegraphics[width=.9\linewidth]{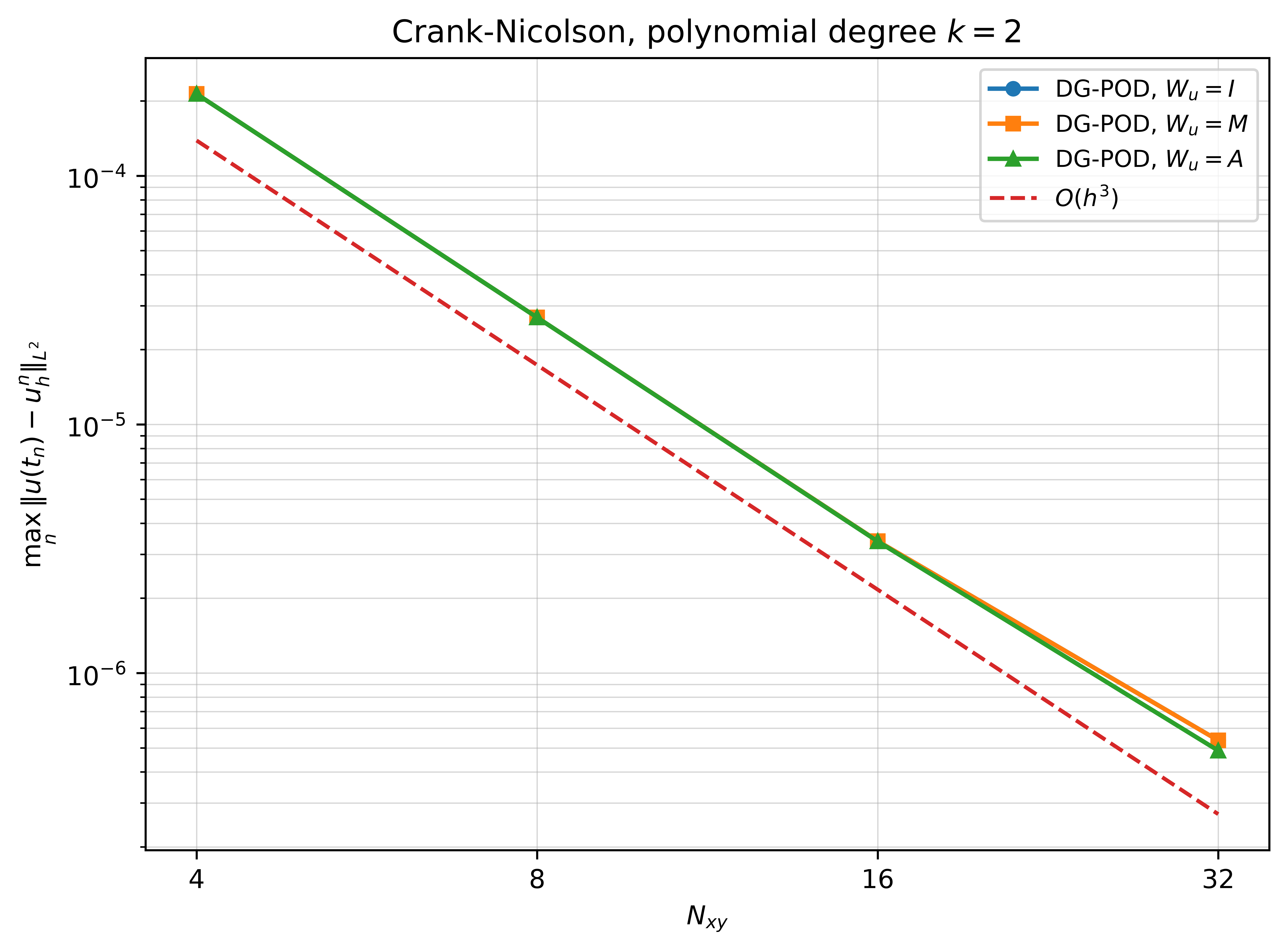}
    \caption{$L_2$ error.}
\end{subfigure}
\hfill
\begin{subfigure}{0.48\textwidth}
    \centering
    \includegraphics[width=.9\linewidth]{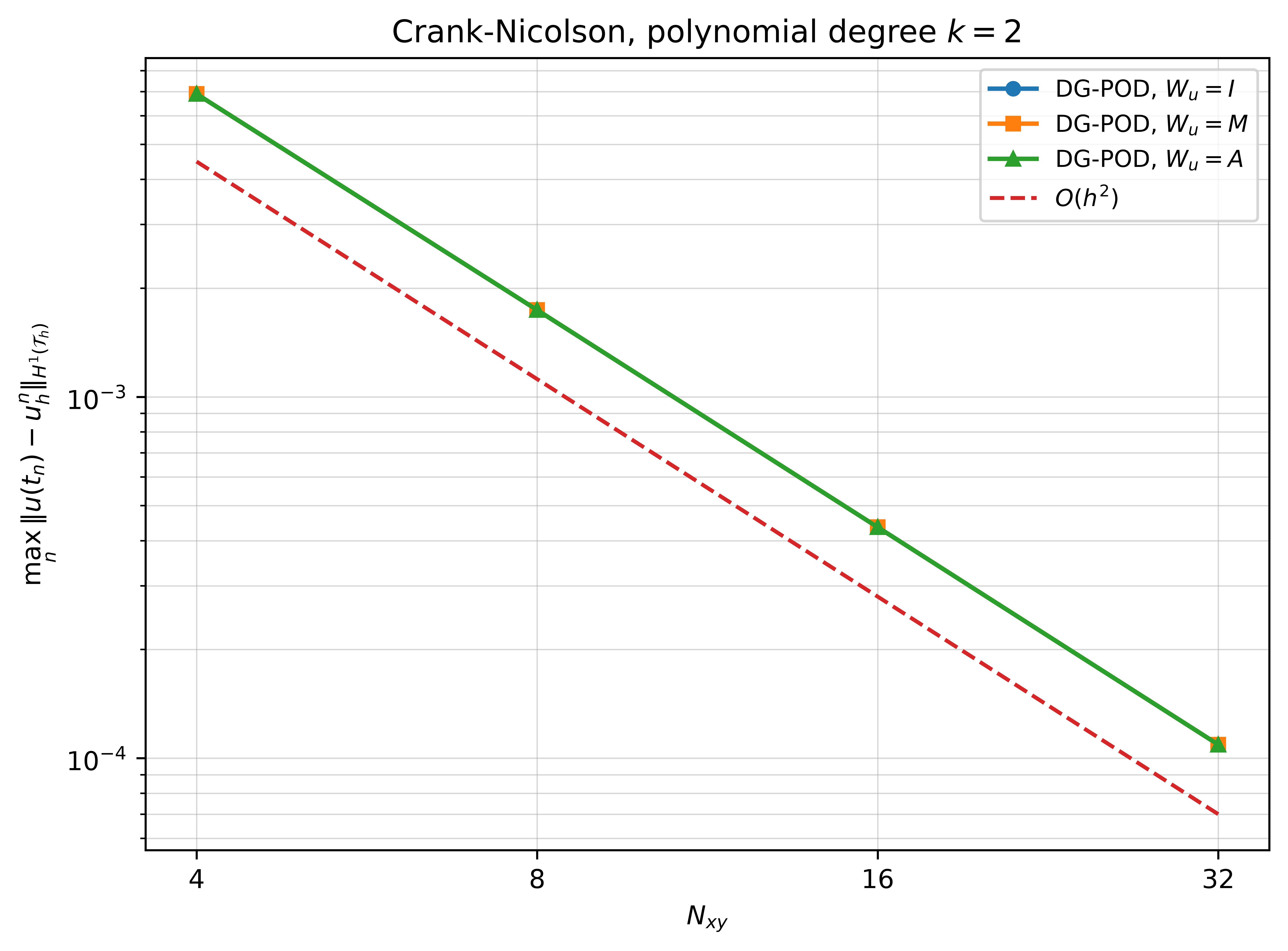}
    \caption{$H^1$ error.}
\end{subfigure}

\caption{Spatial convergence of the DG-POD approximation with the quadratic basis.}
\label{fig:ex1:space:k2}
\end{figure}

\begin{table}
\centering
\caption{Computational performance of the DG and DG-POD approximations for $k=1$ and $N_t=20000$.}
\label{tab:spatial:speedup:k1}
\small
\begin{tabular}{cc|cccc}
\hline
$N_{xy}$ & $W_u$ 
& Set-up[s]
& Solve[s] (speed-up)
& Set-up+Solve[s] (speed-up)
& Total[s] (speed-up)
\\
\hline
\multirow{4}{*}{4}
& FOM       & 0.013 & 0.510 (--)  & 0.522 (--)  & 0.522 (--) \\
& $I$   & 0.117 & 0.189 (2.69) & 0.306 (1.70) & 0.809 (0.65) \\
& $M$    & 0.079 & 0.161 (3.17) & 0.239 (2.18) & 0.745 (0.70) \\
& $A$  & 0.099 & 0.154 (3.31) & 0.253 (2.07) & 0.751 (0.70) \\
\hline
\multirow{4}{*}{8}
& FOM     & 0.008 & 1.615 (--)  & 1.623 (--)  & 1.623 (--) \\
& $I$    & 0.097 & 0.204 (7.93) & 0.300 (5.41) & 1.057 (1.54) \\
& $M$   & 0.107 & 0.193 (8.38) & 0.300 (5.41) & 0.992 (1.64) \\
& $A$  & 0.112 & 0.192 (8.40) & 0.304 (5.34) & 0.996 (1.63) \\
\hline
\multirow{4}{*}{16}
& FOM       & 0.023 & 7.466 (--)   & 7.489 (--)   & 7.489 (--) \\
& $I$    & 0.123 & 0.664 (11.25) & 0.786 (9.52) & 2.624 (2.85) \\
& $M$    & 0.122 & 0.562 (13.29) & 0.684 (10.94) & 2.358 (3.18) \\
& $A$  & 0.145 & 0.579 (12.88) & 0.725 (10.33) & 2.399 (3.12) \\
\hline
\multirow{4}{*}{32}
& FOM        & 0.093 & 53.950 (--)  & 54.044 (--)  & 54.044 (--) \\
& $I$    & 0.153 & 2.162 (24.96) & 2.315 (23.34) & 10.445 (5.17) \\
& $M$    & 0.228 & 2.095 (25.76) & 2.323 (23.27) & 8.543 (6.33) \\
& $A$ & 0.288 & 2.143 (25.18) & 2.431 (22.24) & 8.650 (6.25) \\
\hline
\end{tabular}
\end{table}

\begin{table}
\centering
\caption{Computational performance of the DG and DG-POD approximations for $k=2$ and $N_t=20000$.}
\label{tab:spatial:speedup:k2}
\small
\begin{tabular}{cc|cccc}
\hline
$N_{xy}$ & $W_u$ 
& Set-up[s]
& Solve[s] (speed-up)
& Set-up+Solve[s] (speed-up)
& Total[s] (speed-up)
\\
\hline
\multirow{4}{*}{4}
& FOM        & 0.008 & 0.744 (--)  & 0.752 (--)  & 0.752 (--) \\
& $I$    & 0.082 & 0.172 (4.32) & 0.254 (2.95) & 0.954 (0.79) \\
& $M$   & 0.085 & 0.161 (4.61) & 0.247 (3.04) & 0.884 (0.85) \\
& $A$  & 0.094 & 0.163 (4.55) & 0.258 (2.92) & 0.895 (0.84) \\
\hline
\multirow{4}{*}{8}
& FOM         & 0.014 & 3.381 (--)  & 3.395 (--)  & 3.395 (--) \\
& $I$    & 0.094 & 0.259 (13.03) & 0.353 (9.61) & 1.611 (2.11) \\
& $M$   & 0.099 & 0.251 (13.47) & 0.350 (9.70) & 1.627 (2.09) \\
& $A$  & 0.090 & 0.246 (13.74) & 0.336 (10.10) & 1.614 (2.10) \\
\hline
\multirow{4}{*}{16}
& FOM         & 0.068 & 27.677 (--)  & 27.745 (--)  & 27.745 (--) \\
& $I$    & 0.122 & 1.178 (23.50) & 1.300 (21.34) & 5.640 (4.92) \\
& $M$    & 0.137 & 0.977 (28.34) & 1.113 (24.92) & 5.670 (4.89) \\
& $A$  & 0.188 & 0.967 (28.61) & 1.155 (24.02) & 5.712 (4.86) \\
\hline
\multirow{4}{*}{32}
& FOM        & 0.539 & 208.874 (--) & 209.413 (--) & 209.413 (--) \\
& $I$    & 0.278 & 5.730 (36.45) & 6.008 (34.86) & 34.202 (6.12) \\
& $M$   & 0.411 & 4.674 (44.69) & 5.085 (41.19) & 29.475 (7.10) \\
& $A$  & 0.549 & 5.283 (39.54) & 5.832 (35.91) & 30.222 (6.93) \\
\hline
\end{tabular}
\end{table}

As shown in \Cref{tab:spatial:speedup:k1,tab:spatial:speedup:k2}, the DG-POD approximation significantly reduces the time-stepping cost, and the solve-time speed-up increases with spatial refinement. When the snapshot-generation cost is included, the total speed-up is relatively small on coarse meshes but increases for finer discretizations, as snapshot generation accounts for a smaller portion of the overall computational cost. The three choices of $W_u$ yield broadly comparable computational performance.

\paragraph{Temporal convergence}

We next investigate the temporal convergence of the proposed DG-POD approximation. To evaluate the temporal convergence order, the spatial discretization is fixed with $k=2$ and $N_{xy}=128$, while the number of time steps is successively refined. The reduced basis is constructed from $N_s=100$ snapshots with a fixed reduced dimension $\ell=20$. Both the backward Euler and Crank--Nicolson schemes are considered and compared for the three POD inner products $W_u=I$, $M$, and $A$. For the temporal convergence study, we consider only the maximum-in-time $L_2$ error. On the finer time grids, the broken $H^1$ error can be dominated by the fixed spatial discretization error and therefore no longer provides a clear measure of the temporal convergence rate.

\begin{figure}
\centering
\begin{subfigure}{0.48\textwidth}
    \centering
    \includegraphics[width=.9\linewidth]{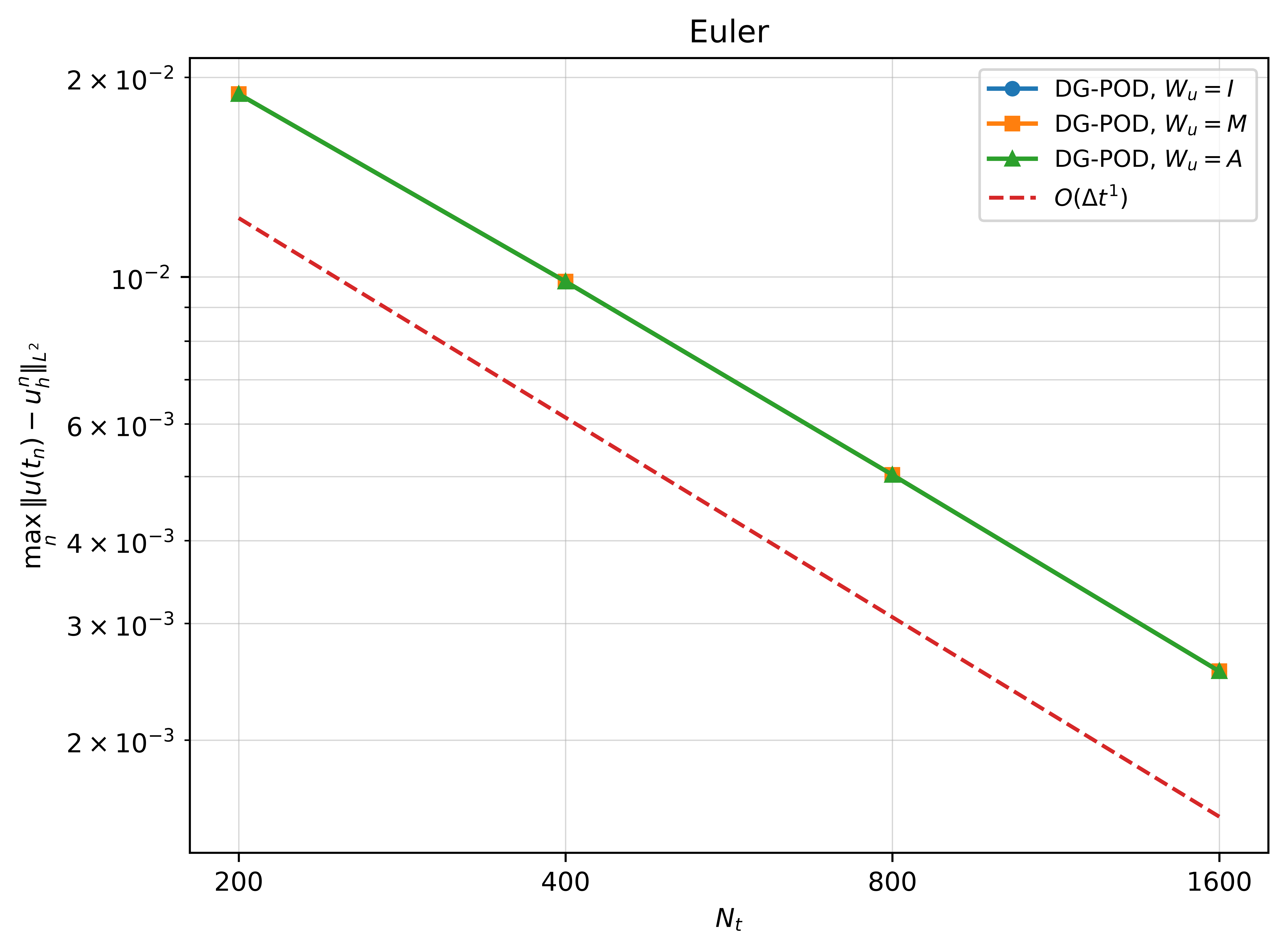}
    \caption{Backward Euler method.}
\end{subfigure}
\hfill
\begin{subfigure}{0.48\textwidth}
    \centering
    \includegraphics[width=.9\linewidth]{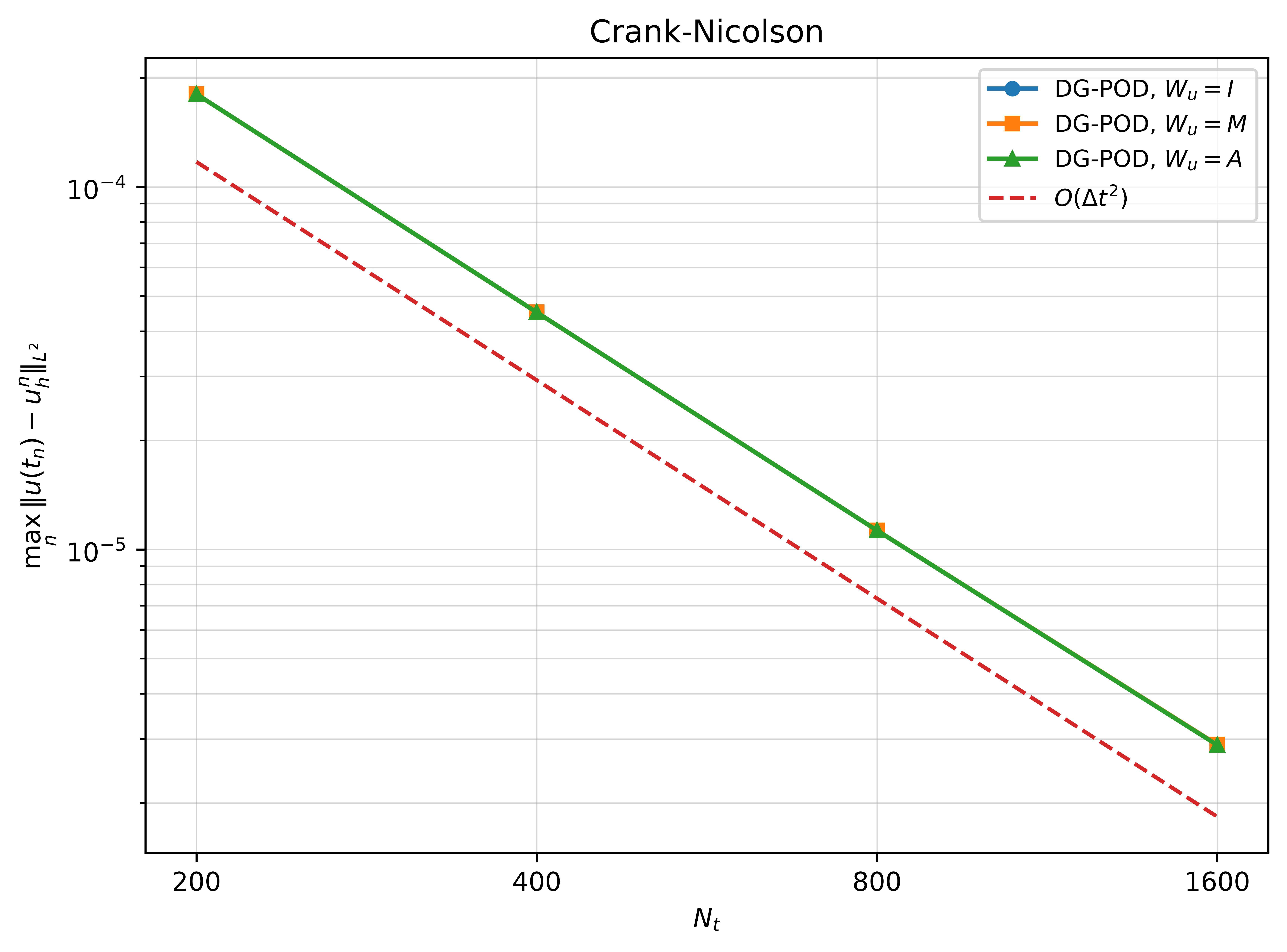}
    \caption{Crank--Nicolson Method.}
\end{subfigure}

\caption{Temporal convergence of the DG-POD approximation with $N_s=100$, $N_{xy}=128$ and $k=2$.}
\label{fig:ex1:time}
\end{figure}

\Cref{fig:ex1:time} presents the temporal convergence histories in the maximum-in-time $L_2$ norm. The backward Euler scheme exhibits the expected first-order convergence, whereas the Crank--Nicolson scheme achieves second-order convergence. The three choices of the POD inner product yield nearly identical temporal accuracy, indicating that the DG-POD approximation preserves the convergence properties of the underlying time-stepping schemes.

The computational results in \Cref{tab:temporal:speedup:euler,tab:temporal:speedup:cn} show that the DG-POD approximation reduces the time-stepping cost for both schemes. The solve-time speed-up is approximately 80 and remains nearly independent of the POD inner product. The speed-up based on the combined set-up and solve times increases with $N_t$, because the relative contribution of the fixed set-up cost decreases as the number of time steps increases. When the snapshot-generation cost is included, the total speed-up remains limited on coarse temporal grids because $N_s$ is comparable to $N_t$, making snapshot generation a major component of the overall simulation time. As the gap between $N_t$ and $N_s$ increases, the relative contribution of the snapshot-generation cost decreases, leading to a steadily larger total speed-up. In particular, the total speed-up reaches approximately 6 at $N_t=1600$.

The numerical results in \cref{ssec:numeric:smooth} confirm the \textit{a priori} error estimates with respect to both spatial and temporal discretizations. In particular, the expected optimal convergence orders are observed in the $L_2$ and broken $H^1$ norms for the spatial discretization, as well as in the $L_2$ norm for the backward Euler and Crank--Nicolson schemes. These convergence properties are preserved independently of the choice of POD inner product. Moreover, because the exact solution is smooth and separable, its dynamics can be accurately represented using only a small number of POD modes, provided that sufficiently many snapshots are available; for example, $\ell=5$ and $N_s=1000$ already yield an accurate approximation. In the next experiment, we investigate the influence of the POD dimension on a problem with highly oscillatory solution dynamics.

\begin{table}
\centering
\caption{Computational performance of the DG and DG-POD approximations using the backward Euler method.}
\label{tab:temporal:speedup:euler}
\small
\begin{tabular}{cc|cccc}
\hline
$N_t$ & $W_u$
& Set-up [s]
& Solve [s] (speed-up)
& Set-up+Solve [s] (speed-up)
& Total [s] (speed-up)
\\
\hline
\multirow{4}{*}{200}
& FOM & 26.90 & 62.47 (--) & 89.38 (--) & 89.38 (--) \\
& $I$ & 2.19 & 0.45 (137.54) & 2.64 (33.82) & 83.45 (1.07) \\
& $M$ & 2.41 & 0.42 (149.29) & 2.83 (31.63) & 83.63 (1.07) \\
& $A$ & 2.59 & 0.40 (154.40) & 3.00 (29.81) & 83.80 (1.07) \\
\hline
\multirow{4}{*}{400}
& FOM & 27.34 & 125.78 (--) & 153.12 (--) & 153.12 (--) \\
& $I$ & 2.09 & 0.88 (142.72) & 2.97 (51.57) & 83.77 (1.83) \\
& $M$ & 2.31 & 0.86 (145.85) & 3.17 (48.28) & 83.98 (1.82) \\
& $A$ & 2.56 & 0.77 (163.18) & 3.33 (45.95) & 84.14 (1.82) \\
\hline
\multirow{4}{*}{800}
& FOM & 27.37 & 245.79 (--) & 273.15 (--) & 273.15 (--) \\
& $I$ & 2.07 & 1.72 (142.81) & 3.80 (71.96) & 84.60 (3.23) \\
& $M$ & 2.33 & 1.74 (141.41) & 4.07 (67.20) & 84.87 (3.22) \\
& $A$ & 2.55 & 1.54 (159.28) & 4.09 (66.77) & 84.89 (3.22) \\
\hline
\multirow{4}{*}{1600}
& FOM & 27.32 & 531.31 (--) & 558.63 (--) & 558.63 (--) \\
& $I$ & 2.13 & 3.52 (151.06) & 5.65 (98.95) & 86.45 (6.46) \\
& $M$ & 2.43 & 3.50 (151.83) & 5.93 (94.27) & 86.73 (6.44) \\
& $A$ & 2.51 & 3.30 (160.97) & 5.81 (96.15) & 86.61 (6.45) \\
\hline
\end{tabular}
\end{table}

\begin{table}
\centering
\caption{Computational performance of the DG and DG-POD approximations using the Crank--Nicolson method.}
\label{tab:temporal:speedup:cn}
\small
\begin{tabular}{cc|cccc}
\hline
$N_t$ & $W_u$
& Set-up [s]
& Solve [s] (speed-up)
& Set-up+Solve [s] (speed-up)
& Total [s] (speed-up)
\\
\hline
\multirow{4}{*}{200}
& FOM & 26.94 & 60.63 (--) & 87.57 (--) & 87.57 (--) \\
& $I$ & 1.99 & 0.42 (143.01) & 2.41 (36.30) & 80.89 (1.08) \\
& $M$ & 2.33 & 0.42 (143.38) & 2.75 (31.82) & 81.23 (1.08) \\
& $A$ & 2.47 & 0.42 (146.00) & 2.89 (30.31) & 81.36 (1.08) \\
\hline
\multirow{4}{*}{400}
& FOM & 27.13 & 121.27 (--) & 148.41 (--) & 148.41 (--) \\
& $I$ & 2.08 & 0.81 (149.18) & 2.90 (51.26) & 81.37 (1.82) \\
& $M$ & 2.40 & 0.85 (142.57) & 3.25 (45.69) & 81.72 (1.82) \\
& $A$ & 2.51 & 0.85 (142.01) & 3.36 (44.13) & 81.84 (1.81) \\
\hline
\multirow{4}{*}{800}
& FOM & 27.11 & 261.41 (--) & 288.52 (--) & 288.52 (--) \\
& $I$ & 2.27 & 1.69 (154.64) & 3.96 (72.82) & 82.44 (3.50) \\
& $M$ & 2.40 & 1.73 (151.03) & 4.13 (69.87) & 82.60 (3.49) \\
& $A$ & 2.52 & 1.71 (152.46) & 4.23 (68.14) & 82.71 (3.49) \\
\hline
\multirow{4}{*}{1600}
& FOM & 27.25 & 482.60 (--) & 509.84 (--) & 509.84 (--) \\
& $I$ & 1.95 & 3.37 (143.05) & 5.32 (95.75) & 83.80 (6.08) \\
& $M$ & 2.21 & 3.32 (145.36) & 5.53 (92.18) & 84.01 (6.07) \\
& $A$ & 2.40 & 3.44 (140.49) & 5.83 (87.45) & 84.31 (6.05) \\
\hline
\end{tabular}
\end{table}

\subsection{Initial traction pulse problem}\label{ssec:traction}

To assess the proposed DG-POD method in a more realistic setting, we consider a two-dimensional viscoelastic wave problem. The relaxation parameters are adopted from representative viscoelastic material data~\cite{lin2009viscoelastic}, while the elastic constants are selected for numerical illustration. The material parameters are
\[
\mu = \SI{0.455}{\mega\pascal},\q
\nu = 0.45,\q
\rho = \SI{965}{\kilogram\per\meter\cubed}, \q
\varphi_0=0.89,\q
\varphi_1=0.08,\q
\varphi_2=0.03,\q
\tau_1=0.165,\q
\tau_2=5.
\]
Under the plane-strain assumption, the first Lam\'e parameter is
\[
\lambda
=
\frac{2\mu\nu}{1-2\nu}
=
\SI{4.095}{\mega\pascal}.
\]
The fourth-order tensor is defined by
\[
D_{ijkl}
=
\lambda\,\delta_{ij}\delta_{kl}
+
\mu
\left(
\delta_{ik}\delta_{jl}
+
\delta_{il}\delta_{jk}
\right).
\]
We take $\Omega=(0,1)^2$, $T=5$ and impose homogeneous Dirichlet boundary conditions on all boundaries except the left boundary
\[
\Gamma_N=\{(x,y)\in\partial\Omega:\ x=0\}.
\]
The body force and initial data are set to zero:
$\boldsymbol f=\boldsymbol u_0=\boldsymbol w_0=\boldsymbol 0$.

A traction force is applied only on $\Gamma_N$ at the initial time step:
\[
\boldsymbol g_N(0,y,0)
=
g_0
\begin{pmatrix}
\sin(\pi y)\\
0
\end{pmatrix},
\qquad y\in(0,1),\qq\text{whereas}\qq 
\boldsymbol g_N(0,y,t)=\boldsymbol 0,\qquad t>0.
\]
Thus, after the initial traction pulse, the subsequent dynamics are governed solely by wave propagation.

Since no analytical solution is available, the DG-FOM solution with $N_t=8000$ is used as the reference. All computations use the same spatial discretization with $N_{xy}=64$ and $k=2$, together with the Crank--Nicolson method.
The POD basis is constructed from a separate DG-FOM simulation using
$N_s=1000$ snapshots. The DG-POD simulations employ the same spatial
discretization and Crank--Nicolson time-stepping scheme as the reference
DG-FOM solution. With $W_u=I$, the number of POD basis functions is varied as
\[
\ell=10,\ 20,\ 50,\ 100,\ 200.
\]

The reduced-order performance is evaluated from two complementary viewpoints. First, the transient response is assessed by monitoring the $x$-displacement (horizontal displacement) $u_1(\boldsymbol{x}_p,t)$ at the observation point
\[
\boldsymbol{x}_p=(0.203,0.497),
\]
which is located inside the computational domain. This comparison provides a quantitative measure of how accurately the reduced-order model reproduces the wave propagation. Second, the computational efficiency of the proposed method is assessed by comparing the setup time, solution time, and total simulation time with those of the full-order DG solver. We also report the relative POD tail defined by
\[
\eta_\ell=\left(\frac{\sum_{j=\ell+1}^{m}\sigma_j^2}{\sum_{j=1}^{m}\sigma_j^2}\right)^{1/2},
\]
which measures the relative magnitude of the discarded snapshot components.

\paragraph{Influence of the reduced basis dimension.}

\Cref{fig:ex2:wave} compares the displacement histories at the observation point $\bs{x}_p=(0.203,0.497)$ over the time domain $0\leq t\leq 1$ for different reduced dimensions.
With 10 POD basis functions, the DG-POD approximation reproduces the main displacement response generated by the applied traction, although some of the higher-frequency oscillations are smoothed. As the reduced dimension increases, the oscillatory structure is progressively recovered, and the DG-POD solution approaches the reference DG-FOM history. In particular, 50 POD basis functions provide a close approximation of both the global waveform and the principal transient oscillations over this time interval. This behavior indicates that the dominant displacement response is captured by a relatively small number of POD modes, whereas additional modes are required to resolve the finer-scale wave components. Nevertheless, as shown in \Cref{fig:ex2:wave:long}, all DG-POD solutions reproduce the long-time relaxation behavior, indicating that even low-dimensional reduced spaces capture the characteristic viscoelastic material response.

\begin{figure}
    \centering
    \includegraphics[width=\linewidth,
    trim={1.9cm 1.0cm 0.0cm 2.0cm},
    clip]{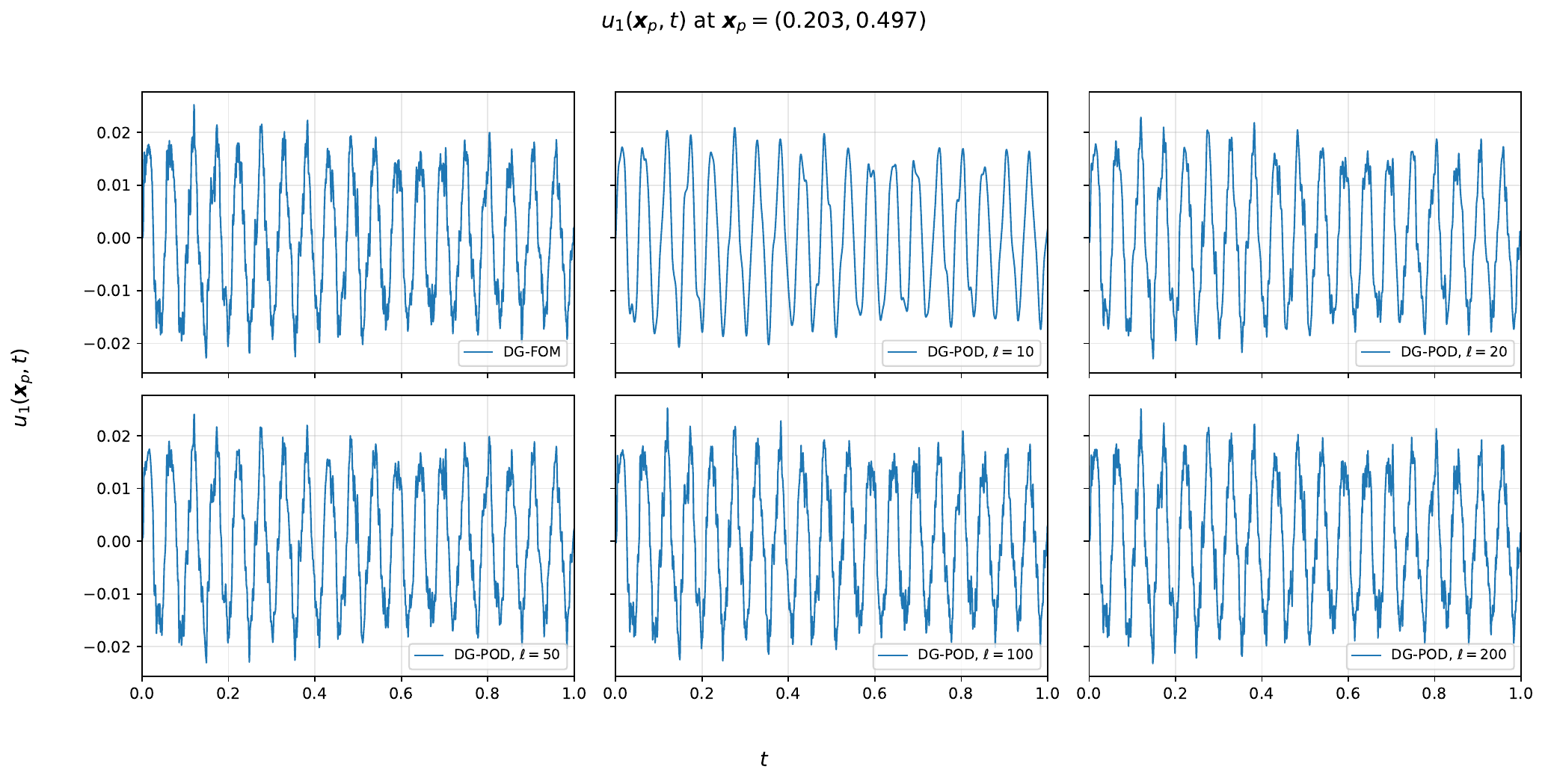}
\caption{Displacement histories at
    $\boldsymbol{x}_p=(0.203,0.497)$ over the time domain
    $0\leq t\leq 1$ for different POD dimensions.}
\label{fig:ex2:wave}
\end{figure}
\begin{figure}
    \centering
    \includegraphics[width=\linewidth,
    trim={1.9cm 1.0cm 0.0cm 2.0cm},
    clip]{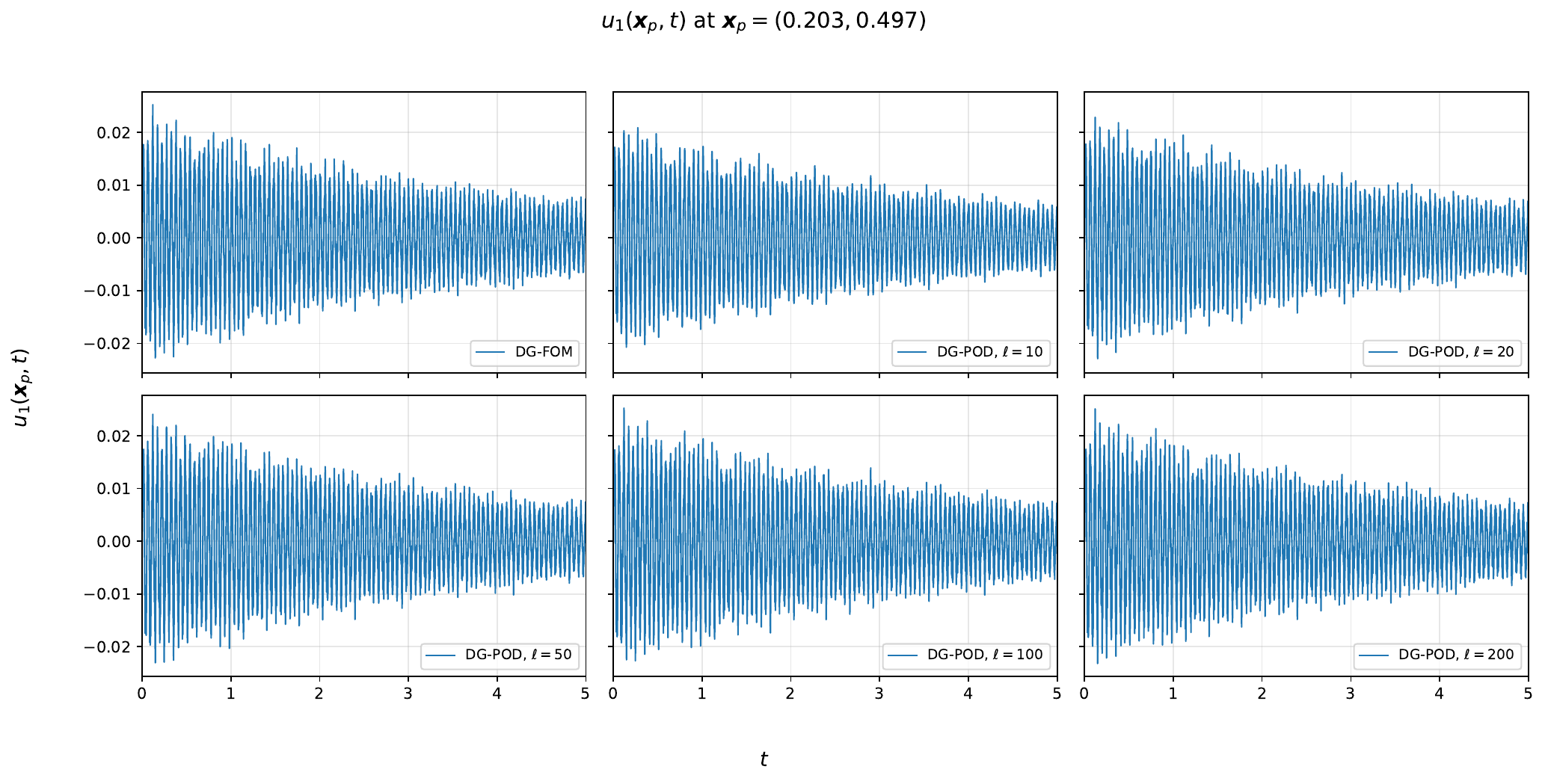}
\caption{Displacement histories at
    $\boldsymbol{x}_p=(0.203,0.497)$ over the time domain
    $0\leq t\leq 5$ for different POD dimensions.}
\label{fig:ex2:wave:long}
\end{figure}

\Cref{tab:ex2:performance} further quantifies the computational performance of the proposed DG-POD method. As the reduced dimension increases from $\ell=10$ to $\ell=200$, the relative POD tail decreases from $1.46\times10^{-1}$ to $6.52\times10^{-3}$, indicating that progressively less snapshot energy is discarded. Since the present problem exhibits a highly oscillatory transient response, additional POD modes are required to represent the finer temporal features and reduce the truncation error. This trend is consistent with the increasingly accurate recovery of the oscillatory response observed in \Cref{fig:ex2:wave}.

From a computational perspective, increasing the number of POD basis functions raises both the reduced-system setup and solution costs because larger reduced operators must be assembled and solved. Nevertheless, even for $\ell=200$, the DG-POD solving phase is approximately $5.19\times10^{3}$ times faster than that of the reference DG-FOM computation. When the snapshot-generation cost is also included, the total computational time is approximately $78$s, compared with $582$s for the reference DG-FOM simulation, corresponding to an overall speed-up of about $7.5$. These results demonstrate that the DG-POD method remains computationally advantageous even after accounting for the offline cost of generating the snapshots and constructing the reduced basis.

\begin{table}
\centering
\caption{Computational performance of the DG and DG-POD approximations for different POD dimensions.}
\label{tab:ex2:performance}
\small
\begin{tabular}{cc|c|cccc}
\hline
Method & $\ell$ & $\eta_\ell$
& Set-up [s]
& Solve [s] (speed-up)
& Set-up+Solve [s] (speed-up)
& Total [s] (speed-up)
\\
\hline
FOM
& -- & --
& 3.85
& 577.05 (--)
& 580.91 (--)
& 582.10 (--)
\\
\hline
POD
& 10 & 1.46e-01
& 0.68
& 0.03 (2.04e+04)
& 0.71 (8.24e+02)
& 76.86 (7.57)
\\
\hline
POD
& 20 & 9.75e-02
& 0.66
& 0.03 (2.15e+04)
& 0.69 (8.40e+02)
& 76.80 (7.58)
\\
\hline
POD
& 50 & 4.82e-02
& 0.91
& 0.04 (1.52e+04)
& 0.94 (6.15e+02)
& 77.09 (7.55)
\\
\hline
POD
& 100 & 2.25e-02
& 1.23
& 0.05 (1.16e+04)
& 1.28 (4.54e+02)
& 77.42 (7.52)
\\
\hline
POD
& 200 & 6.52e-03
& 1.80
& 0.11 (5.19e+03)
& 1.91 (3.05e+02)
& 78.12 (7.45)
\\
\hline
\end{tabular}
\end{table}
\section{Conclusion}\label{sec:conclusion}
In this paper, we proposed and analyzed a discontinuous Galerkin proper orthogonal decomposition (DG-POD) method for dynamic linear viscoelasticity based on an internal-variable formulation. We established the well-posedness and derived \textit{a priori} spatial error estimates for the semi-discrete DG problem. For the fully discrete DG-POD approximation, we presented an error decomposition that separates the contributions of the spatial discretization, temporal discretization, and POD truncation, under appropriate stability and projection assumptions for the reduced evolution. These results provide a theoretical basis for the proposed DG-POD framework.

Numerical experiments confirmed the accuracy, robustness, and computational efficiency of the proposed DG-POD framework across both smooth benchmark problems and transient wave propagation. The reduced model preserved the convergence behavior of the underlying DG discretization and reproduced the dominant dynamics of the full-order system with lower computational cost. Moreover, the results illustrated the expected accuracy--complexity trade-off: low-dimensional reduced spaces captured the large-scale response, whereas progressively richer POD spaces were required to resolve finer transient oscillations. These findings indicate that the proposed methodology provides a broadly applicable reduced-order framework for time-dependent viscoelastic systems. Future work will focus on extensions to parameter-dependent, nonlinear, and more general history-dependent continuum models, together with adaptive snapshot selection, basis enrichment, and hyper-reduction strategies for large-scale repeated simulations.

\section*{Acknowledgments}
This work was partially supported by Chonnam National University(No. 2025-1877-01) and by Global-Learning \& Academic research institution for Master’s $\cdot$ PhD students, and Postdocs(LAMP) Program of the National Research Foundation of Korea(NRF) grant funded by the Ministry of Education(No. RS-2024-00442775). Also, it was supported by the National Research Foundation of Korea (NRF) grant funded by the
Korea government (MSIT) (No. RS-2023-NR076790).

\section*{Conflict of interest}
The authors declare that they have no conflict of interest.

\section*{Data availability}
The source code and data used to generate the numerical results presented in this work are publicly available at the author's GitHub repository:
\url{https://github.com/Yongseok7717/pod_visco}.

\section*{Declaration of generative AI use}
The authors
declare they have not used Artificial Intelligence (AI) tools in the creation of this article.
\printcredits

\bibliographystyle{cas-model2-names}

\bibliography{refs}



\end{document}